\newcommand{\beq}{\begin{equation*}}
\newcommand{\eeq}{\end{equation*}}
\newcommand{\beqn}{\begin{equation}}
\newcommand{\eeqn}{\end{equation}}
\newcommand{\ba}{\begin{array}}
\newcommand{\ea}{\end{array}}
\def\bal#1\eal{\begin{align*}\begin{split}#1\end{split}\end{align*}}
\def\baln#1\ealn{\begin{align}\begin{split}#1\end{split}\end{align}}
\newcommand{\ds}{\displaystyle}
\newcommand{\ts}{\textstyle}
\newcommand{\Z}{\mathbb Z}
\newcommand{\R}{\mathcal{R}_{a,\,b,\,\alpha}}
\newcommand{\Ra}{\mathcal{R}_a}
\newcommand{\Rb}{\mathcal{R}_b}
\newcommand{\Sn}{\mathcal{S}_{n,\,\ell}}
\newcommand{\dd}{\mathrm{d}}
\newcommand{\F}{\mathcal{F}}
\theoremstyle{theorem}
\newtheorem{satz}{Theorem}
\newtheorem{coro}{Corollary}
\begin{document}

\title{Buffon's problem with a star of needles \\ and a lattice of parallelograms \vspace{-0.1cm}}
\author{Uwe B\"asel}
\date{}
\maketitle
\vspace{-0.7cm}

\begin{abstract}
\noindent
A star of $n\geq 2$ line segments (needles) of equal length with common endpoint and constant angular spacing is randomly placed onto a lattice which is the union of two families of equidistant lines in the plane with angle $\alpha$ between the nonparallel lines.
For odd $n$, we calculate the probabilities of exactly $i$ intersections between the star and the lattice (for even $n$, see \cite{Baesel3}). Using a geometrical method, we derive the limit distribution function of the relative number of intersections as $n\rightarrow\infty$. This function is independent of~$\alpha$. We show that the relative numbers for each of the two families are asymptotically independent random variables.\\[0.2cm]  
\textbf{2010 Mathematics Subject Classification:} 60D05, 52A22\\[0.2cm]
\textbf{Keywords:} Buffon's problem, geometric probability, hitting/inter\-section probability, random non-convex sets, lattice of parallelograms, limit distribution, convolution of distributions, asymptotically independent random variables
\vspace{-0.1cm}
\end{abstract}
\section{Introduction}
\vspace{-0.2cm}
We consider the random throw of a star $\Sn$ of line segments onto a plane ruled with two families $\Ra$ and $\Rb$ of parallel lines,
\begin{align*}
  \Ra & := \{(x,y)\in\mathbb{R}^2\mid x\sin\alpha-y\cos\alpha=ka\,,k\in\Z\}\,,\\
  \Rb & := \{(x,y)\in\mathbb{R}^2\mid y=mb\,,m\in\Z\}\,,
\end{align*}
where $a$ and $b$ are positive real constants, $\alpha\in\mathbb{R}$, $0<\alpha\leq\pi/2$, and put $\R:=\Ra\cup\Rb$. We denote the parallelogram
\[ \F:=\{(x,y)\in\mathbb{R}^2\;|\;0\leq y\leq b\,,\;y\cot\alpha\leq x\leq a\csc\alpha+y\cot\alpha\} \]  
shown in $\mbox{Fig.}\,\ref{Bild1}$ the {\em fundamental cell of $\R$}. The star $\Sn$ consists of $n$ ($2\leq n<\infty$) line segments ({\em needles}) of equal length $\ell$ with common endpoint and constant angular spacing $2\pi/n$ between neighbouring needles. (The convex hull of $\Sn$ is the regular $n$-gon with circumscribed circle of radius $\ell$.) 

\begin{figure}[h]
  \vspace{-0.4cm}
  \begin{center}
    \includegraphics[scale=0.85]{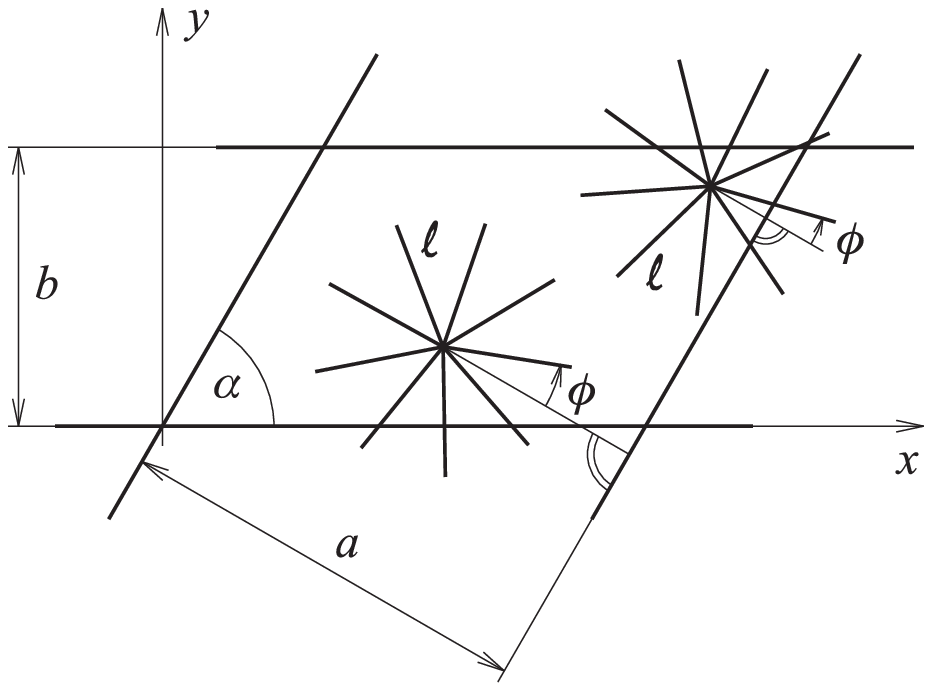}
  \end{center}
  \vspace{-0.7cm}
  \caption{\label{Bild1} Star $\Sn$ (Example $n=9$) and lattice and lattice $\R$}
  \vspace{-0.4cm}
\end{figure}

The {\em random throw $\Sn$ of onto $\R$} is defined as follows: The coordinates $x$ and $y$ of the centre point of $\Sn$ are random variables uniformly distributed in $[y\cot\alpha,a\csc\alpha+y\cot\alpha]$ and $[0,b]$ resp.; the angle $\phi$ between the direction perpendicular to the lines of $\Ra$ and a certain needle of $\Sn$ is a random variable uniformly distributed in $[0,2\pi]$. All 3 random variables are stochastically independent. We assume $2\ell\sin(\frac{\pi}{n}\lfloor\frac{n}{2}\rfloor)\leq\min(a,b)$; in this case the probability that $\Sn$ intersects two lines of $\Ra$ (or $\Rb$) at the same time is equal to zero. The maximum number $M$ of intersections with $\Ra$ (or $\Rb$) is then given by
\beq
  M = \left\{\begin{array}{cl}
    n/2\,, & \mbox{if\;\;\:$n$ is even}\,,\\
	(n+1)/2\,, & \mbox{if\;\;\:$n$ is odd}\,.   
  \end{array}\right.
\eeq

In \cite{Buffon}, Buffon published the solution of his famous needle problem. It is the calculation of the probability of the event	 that $\mathcal{S}_{2,\,\ell}$ intersects $\Ra$. ($\mathcal{S}_{2,\,\ell}$ can be considered as single needle of length $2\ell$.) 
Laplace \cite[pp.\ 359-362]{Laplace} calculated the intersection probability for $\mathcal{S}_{2,\,\ell}$ and $\mathcal{R}_{a,\,b,\,\pi/2}$. Santal\'o \cite{Santalo1} generalized this result for $\R$, $0<\alpha\leq\pi/2$, and derived the probabilites of 0, 1 or 2 intersection points (see also \cite[p.\ 139]{Santalo2}). Duma and Stoka \cite{Duma_Stoka} solved the problem for ellipses and $\mathcal{R}_{a,\,b,\,\pi/2}$. Ren and Zhang \cite{Ren_Zhang} and Aleman et al.\;\cite{Aleman} calculated the intersection probability for an arbitrary convex body $K$ and $\R$, and proved that for $K$ there is an nonvanishing value of $\alpha$ for which the events {\em $K$ intersects $\Ra$} and {\em $K$ intersects $\Rb$} are independent; explicit results for regular $n$-gons ($n\geq 2$) and $\R$ were obtained by B\"asel~\cite{Baesel4}. In \cite{Baesel3}, B\"asel calculated the probabilities of exactly $i$ intersections for $\Sn$ with even $n\geq 2$ and $\R$. Bonanzinga \cite{Bonanzinga} found the intersection probabilities for $\mathcal{S}_{3,\,\ell}$ and $\R$, $\pi/3\leq\alpha\leq\pi/2$.

In the Sections 2 and 3 we calculate the probabilities of exactly $i$ intersections for $\Sn$ with odd $n$, $3\leq n<\infty$, and $\R$, $0<\alpha\leq\pi/2$. In Section~4 we investigate the distribution functions of the {\em relative number of intersections} for $n\in\mathbb{N}$, $n\geq 2$. Using a geometrical method, we derive the limit distribution as $n\rightarrow\infty$. For abbreviation we put $\lambda=\ell/a$, $\mu=\ell/b$, and $\lfloor\,\cdot\,\rfloor$ for the integer part of $\,\cdot\,$.

\section{Intersection probabilities}

\begin{satz} \label{prob}
The probabilities $p(i)$ of exactly $i$ intersections between $\Sn$ and $\R$ are for odd $n\geq 3$, $2\max(\lambda,\mu)\sin(\frac{\pi}{n}\lfloor\frac{n}{2}\rfloor)\leq 1$ and $0<\alpha\leq\frac{\pi}{2n}$ given by
\beq
p(i) = \left\{
\begin{array}{l}
\begin{array}{ll}
  1-\Big[\frac{2n(\lambda+\mu)}{\pi}\,\sin\frac{\pi}{n}-\frac{n\lambda\mu}{\pi}\,f_0(\alpha)\Big]\;, &
					\mbox{if $i=0$}\,,\\[0.25cm]
  \frac{8n(\lambda+\mu)}{\pi}\,\sin^2\frac{\pi}{2n}\sin\frac{i\pi}{n}-\frac{4n\lambda\mu}{\pi}
					\Big[f_1(\alpha)\sin\frac{i\pi}{n}\\[0.25cm]
  \quad -\,f_4(\alpha)\Big(\!\cot\frac{\pi}{n}\sin\frac{i\pi}{n}-i\cos\frac{i\pi}{n}\Big)\Big]\;, &
					\mbox{if $1\leq i\leq M-2$}\,,\\[0.25cm]
  \frac{4n(\lambda+\mu)}{\pi}\Big(\!\cos\frac{\pi}{2n}-\cos^2\frac{3\pi}{4n}\Big)-\frac{2n\lambda\mu}{\pi}\Big[f_2(\alpha)\\[0.25cm]
  \quad -\,2f_4(\alpha)\Big(\!\cot\frac{\pi}{n}\sin\frac{i\pi}{n}-i\cos\frac{i\pi}{n}\Big)\Big]\;, &
					\mbox{if $i=M-1$}\,,\\[0.25cm]
  \frac{4n(\lambda+\mu)}{\pi}\sin^2\frac{\pi}{4n}-\frac{n\lambda\mu}{2\pi}\,\big[4f_3(\alpha)-f_7(\alpha)\big]\;, & 
					\mbox{if $i=M$ and $n=3$}\,,\\[0.25cm]
\end{array}\\
\begin{array}{l}					
  \frac{4n(\lambda+\mu)}{\pi}\sin^2\frac{\pi}{4n}-\frac{2n\lambda\mu}{\pi}\Big[f_3(\alpha)-4f_5(\alpha)\sin\frac{\pi}{n}\\[0.25cm]
  \;\; -\,f_4(\alpha)\Big\{\!(n-5)\sin\frac{\pi}{2n}+2\csc\frac{\pi}{n}\cos\frac{5\pi}{2n}\Big\}\Big]\;,\quad 
					\mbox{if $i=M$ and $n\geq 5$}\,,\\[0.25cm]
  \frac{4n\lambda\mu}{\pi}\Big[2f_6(\alpha)\sin\frac{(i-M)\pi}{n}+2f_5(\alpha)\sin\frac{(i+1-M)\pi}{n}\\[0.25cm]
  \quad -\,f_4(\alpha)\Big\{\!\big(2M-i-3\big)\cos\frac{i\pi}{n}\\[0.25cm]
\end{array}\\
\begin{array}{ll}  
  \quad -\,\csc\frac{\pi}{n}\sin\frac{(2M-i-3)\pi}{n}\Big\}\Big]\;, & \mbox{if $M+1\leq i\leq 2M-3$}\,,\\[0.25cm]		
  \frac{n\lambda\mu}{2\pi}\Big[16f_6(\alpha)\cos\frac{3\pi}{2n}+f_7(\alpha)\Big]\;, &
					\mbox{if $i=2M-2$ and $n\geq 5$}\,,\\[0.25cm]
\end{array}\\
\begin{array}{ll}					
  \frac{n\lambda\mu}{\pi}\,f_8(\alpha)\,, & \mbox{if $i=2M-1$}\,,\\[0.25cm]
  \frac{n\lambda\mu}{2\pi}\,f_9(\alpha)\,, & \mbox{if $i=2M$}\,,
\end{array} 
\end{array}
\right.
\eeq
where
\begin{align*}
  f_0(\alpha) = {} & {\ts 2\big[\frac{\pi}{n}\cos\alpha+g\big(\frac{\pi}{n}-\alpha\big)+h(\alpha)\big]\cos^2\frac{\pi}{2n}\,}\,,\\[0.1cm]
  f_1(\alpha) = {} & {\ts \big[\frac{\pi}{n}\cos\alpha+g\big(\frac{\pi}{n}-\alpha\big)+h(\alpha)\big]\sin\frac{\pi}{n}}\,,
		\displaybreak[0]\\[0.1cm] 
  f_2(\alpha) = {} & {\ts \big[\frac{2\pi}{n}\cos\frac{\pi}{2n}\cos\alpha+g\big(\frac{3\pi}{2n}-\alpha\big)
		-g\big(\frac{\pi}{2n}-\alpha\big)+h\big(\frac{\pi}{2n}+\alpha\big)\big]\sin\frac{\pi}{n}\,}\,,\displaybreak[0]\\[0.1cm]
  f_3(\alpha) = {} & {\ts g\big(\frac{\pi}{2n}-\alpha\big)\sin\frac{\pi}{n}}\,,\displaybreak[0]\\[0.1cm]
  f_4(\alpha) = {} & {\ts \big[\frac{\pi}{n}\cos\alpha+g\big(\frac{\pi}{n}-\alpha\big)+h(\alpha)\big]\sin^2\frac{\pi}{2n}}\,,
		\displaybreak[0]\\[0.1cm]
  f_5(\alpha) = {} & {\ts \big[\frac{2\pi}{n}\cos\frac{\pi}{2n}\cos\alpha+g\big(\frac{3\pi}{2n}-\alpha\big)-g\big(\frac{\pi}{2n}-\alpha\big)
		+h\big(\frac{\pi}{2n}+\alpha\big)\big]\sin^2\frac{\pi}{2n}}\,,\displaybreak[0]\\[0.1cm]
  f_6(\alpha) = {} & {\ts g\big(\frac{\pi}{2n}-\alpha\big)\sin^2\frac{\pi}{2n}}\,,\displaybreak[0]\\[0.1cm]
  f_7(\alpha) = {} & {\ts \frac{\pi}{n}\big(3-2\cos\frac{2\pi}{n}\big)\cos\alpha-g\big(\frac{3\pi}{n}-\alpha\big)
		+3g\big(\frac{2\pi}{n}-\alpha\big)-g\big(\frac{\pi}{n}-\alpha\big)}\\
				   & {\ts +\,7h(\alpha)-h\big(\frac{\pi}{n}+\alpha\big)-h\big(\frac{2\pi}{n}+\alpha\big)}\,\displaybreak[0]\\[0.1cm]
  f_8(\alpha) = {} & {\ts -\frac{\pi}{n}\cos\alpha-g\big(\frac{2\pi}{n}-\alpha\big)+2g\big(\frac{\pi}{n}-\alpha\big)-4h(\alpha)
		+h\big(\frac{\pi}{n}+\alpha\big)}\,,\displaybreak[0]\\[0.1cm]
  f_9(\alpha) = {} & {\ts \frac{\pi}{n}\cos\alpha-g\big(\frac{\pi}{n}-\alpha\big)+3h(\alpha)}	
\end{align*}
with
\beq
  g(x)=\sin x+\alpha\cos x \quad\mbox{and}\quad h(x)=\sin x-\alpha\cos x\,.
\eeq
\end{satz}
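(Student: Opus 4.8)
The plan is to write the total count as $N=N_a+N_b$, where $N_a$ and $N_b$ are the numbers of intersections of $\Sn$ with $\Ra$ and $\Rb$, and to compute the joint law of $(N_a,N_b)$. First I would change coordinates from $(x,y,\phi)$ to $(u,v,\phi)$ with $u=x\sin\alpha-y\cos\alpha$ the signed perpendicular position relative to $\Ra$ and $v=y$ the one relative to $\Rb$. The Jacobian of $(x,y)\mapsto(u,v)$ equals $\sin\alpha$, which cancels the density $\frac{\sin\alpha}{ab}$ of the uniform throw on $\F$; hence $(u\bmod a,\,v\bmod b,\,\phi)$ are independent and uniform on $[0,a]\times[0,b]\times[0,2\pi]$. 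Therefore $N_a$ is a function of $(\phi,u)$ alone, $N_b$ a function of $(\phi,v)$ alone, and the two are conditionally independent given $\phi$, which reduces the task to
\[
  p(i)=\frac{1}{2\pi}\int_0^{2\pi}\sum_{j+k=i}P(N_a=j\mid\phi)\,P(N_b=k\mid\phi)\,\dd\phi .
\]

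Next I would determine $P(N_a=j\mid\phi)$. Because the diameter $2\ell\sin\!\big(\tfrac{\pi}{n}\lfloor\tfrac n2\rfloor\big)$ of the star is at most $a$, for almost every position at most one line of $\Ra$ meets $\Sn$, and $N_a$ then equals the number of needle endpoints on the far side of that line. Writing the endpoint projections onto the $\Ra$-normal as $\ell\cos\!\big(\phi+\tfrac{2\pi k}{n}\big)$, $k=0,\dots,n-1$, one checks that as the position $u$ sweeps $[0,a]$ the event $\{N_a=j\}$ is realized on a union of intervals whose lengths are differences of consecutive order statistics of these projections (the $j$ most negative ones crossing the line below, the $j$ most positive ones crossing the line above). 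Dividing by $a$ yields $P(N_a=j\mid\phi)$ as a piecewise–trigonometric function of $\phi$; the computation for $\Rb$ is identical but uses the projections $\ell\cos\!\big(\phi+\tfrac{2\pi k}{n}-\alpha\big)$ onto the $\Rb$-normal, the shift $\alpha$ being the angle between the two normal directions.

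Finally I would substitute these conditional laws into the convolution and integrate over $\phi$. Terms in which one of $j,k$ vanishes describe meeting a single family and, after integration, produce the $(\lambda+\mu)$–linear contributions, which are independent of $\alpha$; the terms with both $j,k\ge 1$ require $\Sn$ to meet both families, are quadratic in the lengths, and their $\phi$–integrals generate the functions $f_0,\dots,f_9$. This is exactly where $\alpha$ enters, through the fixed angular offset between the two projection systems, and where the hypothesis $0<\alpha\le\frac{\pi}{2n}$ is used: it keeps the shift small compared with the angular spacing $\frac{2\pi}{n}$, freezing the relative interleaving of the two order–statistic systems so that one analytic expression is valid on each $\phi$–range. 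I expect this last step to be the main obstacle: the bookkeeping of which needles cross which line as $\phi$ runs over $[0,2\pi]$ is combinatorially delicate near the extreme counts ($i$ close to $M$ and to $2M$), and the small configurations $n=3,5$ degenerate — which is precisely what forces the separate cases $i=M-1$, $i=M$, $i=2M-2$ and the split between $n=3$ and $n\ge 5$ in the statement.
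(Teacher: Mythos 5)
Your proposal matches the paper's argument in all essentials: your conditional independence of $N_a$ and $N_b$ given $\phi$ is exactly the paper's factorization $P(E_{k,m}\mid\phi)=\frac{1}{ab}\,s(k,\phi)\,s(m,\phi+\alpha)$, your order-statistic interval lengths are its breadth functions $s(k,\cdot)$, and the convolution over $j+k=i$ followed by integration in $\phi$ (with the case splitting forced by $0<\alpha\le\frac{\pi}{2n}$ and by the extreme counts) is precisely how the paper proceeds. No substantive difference.
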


\begin{figure}[h]
  \vspace{0cm}
  \begin{center}
    \includegraphics[scale=0.7]{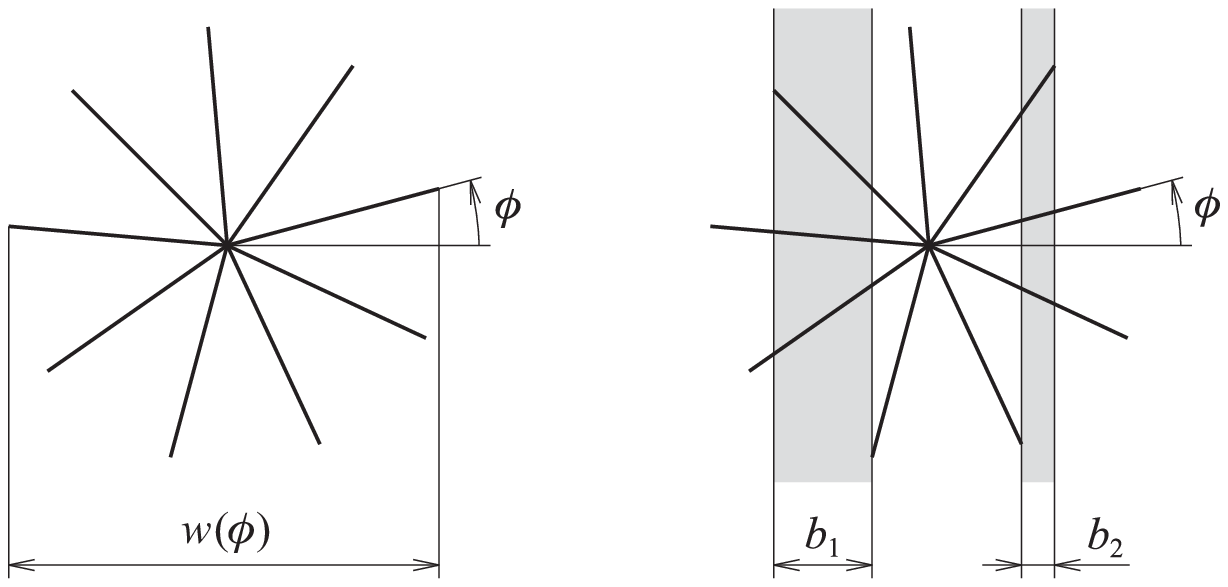}
  \end{center}
  \vspace{-0.6cm}
  \caption{\label{Bild2} $w(\phi)$ and stripes of $s(3,\phi)$ for $\mathcal{S}_{9,\,\ell}$}
  \vspace{0.3cm}
  \begin{center}
    \includegraphics[scale=0.85]{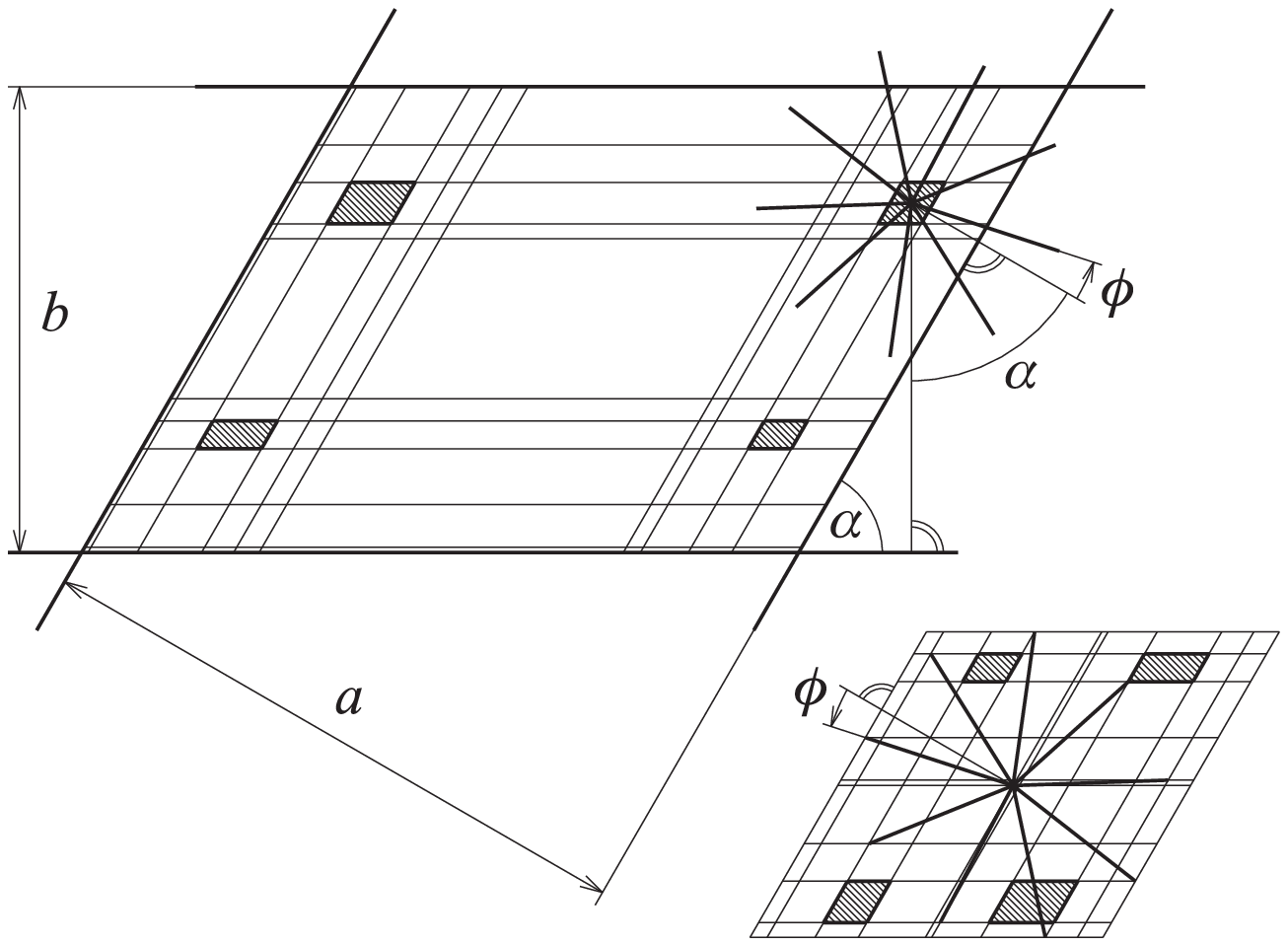}
  \end{center}
  \vspace{-0.6cm}
  \caption{\label{Bild3} $E_{3,\,2}$ for star $\mathcal{S}_{9,\,\ell}$, lattice $\R$ and fixed value of $\phi$}
  \vspace{-0.1cm}
\end{figure}

\begin{proof}
We denote by $w(\phi)$ the width of $\Sn$ (with angle $\phi$) perpendicular to the lines of $\Ra$ (see Fig.\ \ref{Bild2} (left side) and Fig.\ \ref{Bild3}), and by $s(k,\phi)$ the {\em breadth functions} of exactly $k$, $k\in\{1,2,\ldots,M\}$, intersections between $\Sn$ (with angle $\phi$) and $\Ra$. $s(k,\phi)$ is the breadth of one stripe or the sum of the breadths of two stripes. An example of $s(3,\phi)$ for $\mathcal{S}_{9,\,\ell}$ is shown on the right side of Fig.\ \ref{Bild2}. Here, $s(3,\phi)$ is the sum of the breadths $b_1=b_1(\phi)$ and $b_2=b_2(\phi)$. From the symmetry of $\Sn$, it follows that $w$ and $s(k,\,\cdot\,)$ are $\pi/n$-periodic functions. In the following, we have to consider these functions in the half-open intervals:
\[ \mathcal{I}_1:=\bigg[0\,,\,\frac{\pi}{2n}\bigg) \;,\quad \mathcal{I}_2:=\bigg[\frac{\pi}{2n}\,,\,\frac{\pi}{n}\bigg)
	\quad\mbox{and}\quad \mathcal{I}_3:=\bigg[\frac{\pi}{n}\,,\,\frac{3\pi}{2n}\bigg)\,. \]
The required restrictions of the function $w$ are given by
\beq
\begin{array}{c@{\;:=\;}c@{\;=\;}l}
  w_{12}(\phi) & w|_{\mathcal{I}_1\cup\,\mathcal{I}_2}(\phi) & 2\ell\cos\frac{\pi}{2n}\cos\big(\phi-\frac{\pi}{2n}\big)\,,\\[0.15cm]
  w_3(\phi) & w|_{\mathcal{I}_3}(\phi) & 2\ell\cos\frac{\pi}{2n}\cos\big(\phi-\frac{3\pi}{2n}\big)\,.
\end{array}
\eeq
For $s(k,\,\cdot\,)$ and $1\leq k\leq M-2$, one finds
\beq
\begin{array}{c@{\;:=\;}c@{\;=\;}l}
  s_{12}(k,\phi) & s|_{\mathcal{I}_1\cup\,\mathcal{I}_2}(k,\phi) & 4\ell\sin\frac{k\pi}{n}\sin\frac{\pi}{2n}
					\cos\big(\phi-\frac{\pi}{2n}\big)\,,\\[0.15cm]
  s_3(k,\phi) & s|_{\mathcal{I}_3}(k,\phi) & 4\ell\sin\frac{k\pi}{n}\sin\frac{\pi}{2n}\cos\big(\phi-\frac{3\pi}{2n}\big)\,,
\end{array}
\eeq
for $k=M-1$,
\beq
\begin{array}{c@{\;:=\;}c@{\;=\;}l}
  s_1(k,\phi) & s|_{\mathcal{I}_1}(k,\phi) & \ell\big[2\cos\frac{\pi}{2n}\sin\phi-\sin\big(\phi-\frac{3\pi}{2n}\big)\big]\,,\\[0.15cm]
  s_2(k,\phi) & s|_{\mathcal{I}_2}(k,\phi) & \ell\big[\!-\!2\cos\frac{\pi}{2n}\sin\big(\phi-\frac{\pi}{n}\big)
					+\sin\big(\phi+\frac{\pi}{2n}\big)\big]\,,\\[0.15cm]
  s_3(k,\phi) & s|_{\mathcal{I}_3}(k,\phi) & \ell\big[2\cos\frac{\pi}{2n}\sin\big(\phi-\frac{\pi}{n}\big)
					-\sin\big(\phi-\frac{5\pi}{2n}\big)\big]\,,
\end{array}
\eeq
and for $k=M$,
\beq
\begin{array}{c@{\;=\;}c@{\;=\;}l}
  s_1(k,\phi) & s|_{\mathcal{I}_1}(k,\phi) & -\ell\sin\big(\phi-\frac{\pi}{2n}\big)\,,\\[0.15cm]
  s_2(k,\phi) & s|_{\mathcal{I}_2}(k,\phi) & \ell\sin\big(\phi-\frac{\pi}{2n}\big)\,,\\[0.15cm]
  s_3(k,\phi) & s|_{\mathcal{I}_3}(k,\phi) & -\ell\sin\big(\phi-\frac{3\pi}{2n}\big)\,.
\end{array}
\eeq
$E_{k,\,m}$, $0\leq k,m<M$, denotes the event that $\Sn$ has exactly $k$ intersections with $\Ra$ and (at the same time) exactly $m$ intersections with $\Rb$. For fixed value of $\phi$, this event occurs if the centre point of $\Sn$ is in one, two or four disjunct parallelograms that are subsets of $\F$. (An example is shown in Fig.~\ref{Bild3}. For the given angle $\phi$, the event $E_{3,\,2}$ occurs if the centre point of $\mathcal{S}_{9,\,\ell}$ is in one of the four hatched parallelograms.) 
$s(k,\phi)\,s(m,\phi+\alpha)/\sin\alpha$ is the area of the one parallelogram or the sum of the areas of the two or four parallelograms if $1\leq k,m<M$. Therefore, the conditional probability of the event $E_{k,\,m}$ for fixed angle $\phi$ is given by
\beq
  P(E_{k,\,m}\,|\,\phi)
	= \frac{s(k,\phi)\,s(m,\phi+\alpha)/\sin\alpha}{\mbox{Area}\;\mathcal{F}}
	= \frac{1}{ab}\,s(k,\phi)\,s(m,\phi+\alpha)\,.
\eeq
For $0\leq k,m<M$, we have
\begin{align*}
  P(E_{0,\,0}\,|\,\phi)\, = {} & \frac{1}{ab}\,[a-w(\phi)]\,[b-w(\phi+\alpha)]\,,\displaybreak[0]\\[0.05cm]
  P(E_{0,\,m}\,|\,\phi) = {} & \frac{1}{ab}\,[a-w(\phi)]\,s(m,\phi+\alpha)\,,\displaybreak[0]\\[0.05cm]
  P(E_{k,\,0}\,|\,\phi)\, = {} & \frac{1}{ab}\,s(k,\phi)\,[b-w(\phi+\alpha)]\,.
\end{align*}
The density function of the random variable $\phi$ is given by
\beq
f(\phi) = \left\{\begin{array}{c@{\quad\mbox{if}\quad}l}
  \dfrac{n}{\pi} &  \phi\in\mathcal{I}_1\cup\mathcal{I}_2\,,\\[0.3cm]
  0 & \phi\in\mathbb{R}\setminus\mathcal{I}_1\cup\mathcal{I}_2\,.
\end{array}\right.  
\eeq
Therefore, the (total) probability of the event $E_{k,\,m}$ ist given by 
\beq
  P(E_{k,\,m}) 
	= \int_0^{\pi/n}P(E_{k,\,m}\,|\,\phi)\,f(\phi)\,\dd\phi 
	= \frac{n}{\pi}\int_0^{\pi/n}P(E_{k,\,m}\,|\,\phi)\,\dd\phi\,.
\eeq
From the piecewise definition of the functions $w$ and $s(k,\,\cdot\,)$, it follows that we have to distinguish (in general) the cases
\[ 0\leq\phi<\frac{\pi}{2n}-\alpha \;,\;\; \frac{\pi}{2n}-\alpha\leq\phi<\frac{\pi}{2n} \;,\;\;
   \frac{\pi}{2n}\leq\phi<\frac{\pi}{n}-\alpha \;,\;\; \frac{\pi}{n}-\alpha\leq\phi<\frac{\pi}{n}\,. \]
We calculate the probabilities $P(E_{k,\,m})$ in some examples. For $k=0$ and $1\leq m\leq M-2$, we get
\begin{align*}
  P(E_{0,\,m})
	= {} & \frac{n}{\pi ab}\bigg(\int_0^{\pi/n-\alpha}+\int_{\pi/n-\alpha}^{\pi/n}\bigg)\:[a-w(\phi)]\,s(m,\phi+\alpha)\,\dd\phi\\
	= {} & \frac{n}{\pi ab}\,\bigg(\int_0^{\pi/n-\alpha}[a-w_{12}(\phi)]\,s_{12}(m,\phi+\alpha)\,\dd\phi\\
		 & \qquad\; +\int_{\pi/n-\alpha}^{\pi/n}[a-w_{12}(\phi)]\,s_3(m,\phi+\alpha)\,\dd\phi\bigg)\\
	= {} & \frac{8n\mu}{\pi}\sin^2\frac{\pi}{2n}\sin\frac{m\pi}{n}-\frac{2n\lambda\mu}{\pi}\sin\frac{m\pi}{n}\,f_1(\alpha)\,.
\end{align*}
Due to symmetry, for $1\leq k\leq M-2$ and $m=0$, we get
\begin{align*}
  P(E_{k,\,0}) = {} & \frac{8n\lambda}{\pi}\sin^2\frac{\pi}{2n}\sin\frac{k\pi}{n}-\frac{2n\lambda\mu}{\pi}\sin\frac{k\pi}{n}
		\,f_1(\alpha)\,.
\end{align*}
For $1\leq k,m\leq M-2$, we find
\begin{align*}
  P(E_{k,\,m}) 
	= {} & \frac{n}{\pi ab}\bigg(\int_0^{\pi/n-\alpha}+\int_{\pi/n-\alpha}^{\pi/n}\bigg)\:s(k,\phi)\,s(m,\phi+\alpha)\,\dd\phi\\
	= {} & \frac{8n\lambda\mu}{\pi}\sin\frac{k\pi}{n}\sin\frac{m\pi}{n}\,f_4(\alpha)\,,
\end{align*}
for $k=M-1$ and $m=M$,
\begin{align*}
  P(E_{M-1,\,M}) 
	= {} & \frac{n}{\pi ab}\bigg(\int_0^{\frac{\pi}{2n}-\alpha}+\int_{\frac{\pi}{2n}-\alpha}^\frac{\pi}{2n}
			+\int_\frac{\pi}{2n}^{\frac{\pi}{n}-\alpha}+\int_{\frac{\pi}{n}-\alpha}^\frac{\pi}{n}\bigg)\:s(M-1,\phi)\\
		 & \times s(M,\phi+\alpha)\,\dd\phi = \frac{n\lambda\mu}{2\pi}\,f_8(\alpha)\,,
\end{align*}
and due to symmetry, $P(E_{M,\,M-1})=P(E_{M-1,\,M})$.\\[0.2cm]
The remaining calculations deliver the results
\begin{align*}
P(E_{k,\,m}) = {} & 1-\bigg(\frac{2n(\lambda+\mu)}{\pi}\,\sin\frac{\pi}{n}-\frac{n\lambda\mu}{\pi}\,f_0(\alpha)\bigg)\,,\;
\mbox{($k=0=m$)}\,,\\
P(E_{k,\,m}) = {} & \frac{4n\mu}{\pi}\bigg(\cos\frac{\pi}{2n}-\cos^2\frac{3\pi}{4n}\bigg)-\frac{n\lambda\mu}{\pi}\,f_2(\alpha)\,,\;
\mbox{($k=0$, $m=M-1$})\,,\displaybreak[0]\\
P(E_{k,\,m}) = {} & \frac{4n\lambda}{\pi}\bigg(\cos\frac{\pi}{2n}-\cos^2\frac{3\pi}{4n}\bigg)-\frac{n\lambda\mu}{\pi}\,f_2(\alpha)\,,\;
\mbox{($k=M-1$, $m=0$})\,,\displaybreak[0]\\
P(E_{k,\,m}) = {} & \frac{4n\mu}{\pi}\sin^2\frac{\pi}{4n}-\frac{n\lambda\mu}{\pi}\,f_3(\alpha)\,,\;\mbox{($k=0$, $m=M$)}\,,\\
P(E_{k,\,m}) = {} & \frac{4n\lambda}{\pi}\sin^2\frac{\pi}{4n}-\frac{n\lambda\mu}{\pi}\,f_3(\alpha)\,,\;\mbox{($k=M$, $m=0$)}\,,
	\displaybreak[0]\\
P(E_{k,\,m}) = {} & \frac{4n\lambda\mu}{\pi}\sin\frac{k\pi}{n}\,f_5(\alpha)\,,\;\mbox{($1\leq k\leq M-2$, $m=M-1$)}\,,\\
P(E_{k,\,m}) = {} & \frac{4n\lambda\mu}{\pi}\sin\frac{m\pi}{n}\,f_5(\alpha)\,,\;\mbox{($k=M-1$, $1\leq m\leq M-2$)}\,,\\  
P(E_{k,\,m}) = {} & \frac{4n\lambda\mu}{\pi}\sin\frac{k\pi}{n}\,f_6(\alpha)\,,\;\mbox{($1\leq k\leq M-2$, $m=M$)}\,,\\
P(E_{k,\,m}) = {} & \frac{4n\lambda\mu}{\pi}\sin\frac{m\pi}{n}\,f_6(\alpha)\,,\;\mbox{($k=M$, $1\leq m\leq M-2$)}\,,\\
P(E_{k,\,m}) = {} & \frac{n\lambda\mu}{2\pi}\,f_7(\alpha)\,,\;\mbox{($k=M-1=m$)}\,,\\
P(E_{k,\,m}) = {} & \frac{n\lambda\mu}{2\pi}\,f_9(\alpha)\,,\;\mbox{($k=M=m$)}\,.
\end{align*}
The probabilities $p(i)$ of exactly $i$ intersections between $\Sn$ und $\R$ are given by
\beq p(i) = \left\{\ba{cll}
  \ds{\sum_{k=0}^i\;P(E_{k,\,i-k})} & \mbox{for} & 0\leq i\leq M\;,\\[0.5cm]
  \ds{\sum_{k=i-M}^M P(E_{k,\,i-k})} & \mbox{for} & M+1\leq i\leq 2M\;.	
\ea\right.
\eeq
We have
\[ p(0) = P(E_{0,\,0}) = 1-\bigg(\frac{2n(\lambda+\mu)}{\pi}\,\sin\frac{\pi}{n}-\frac{n\lambda\mu}{\pi}\,f_0(\alpha)\bigg)\,.\]
For $1\leq i\leq M-2$, one finds
\begin{align*}
  p(i) 
	= {} & P(E_{0,\,i}) + P(E_{i,\,0}) + \sum_{k=1}^{i-1}\;P(E_{k,\,i-k})\\
	= {} & \frac{8n(\lambda+\mu)}{\pi}\sin^2\frac{\pi}{2n}\sin\frac{i\pi}{n}-\frac{4n\lambda\mu}{\pi}\,f_1(\alpha)\sin\frac{i\pi}{n}\\
		 & +\frac{8n\lambda\mu}{\pi}\,f_4(\alpha)\,\sum_{k=1}^{i-1}\sin\frac{k\pi}{n}\sin\frac{(i-k)\pi}{n}
\end{align*}
with
\[ \sum_{k=1}^{i-1}\sin\frac{k\pi}{n}\sin\frac{(i-k)\pi}{n}
	= \frac{1}{2}\,\bigg(\cot\frac{\pi}{n}\sin\frac{i\pi}{n}-i\cos\frac{i\pi}{n}\bigg)\,. \]
For $i=M-1$, we get
\begin{align*}
  p(i) 
	= {} & P(E_{0,\,M-1}) + P(E_{M-1,\,0}) + \sum_{k=1}^{M-2}\;P(E_{k,\,i-k})\displaybreak[0]\\
	= {} & \frac{4n(\lambda+\mu)}{\pi}\,\bigg(\cos\frac{\pi}{2n}-\cos^2\frac{3\pi}{4n}\bigg)-\frac{2n\lambda\mu}{\pi}\,f_2(\alpha)\\
		 & +\frac{8n\lambda\mu}{\pi}\,f_4(\alpha)\,\sum_{k=1}^{i-1}\sin\frac{k\pi}{n}\sin\frac{(i-k)\pi}{n}
\end{align*}
with the sum as above. For $i=M=2$ and $n=3$, we find
\begin{align*}
  p(M) 
	= {} & P(E_{0,\,M}) + P(E_{M,\,0}) + P(E_{M-1,\,M-1})\\
	= {} & \frac{4n(\lambda+\mu)}{\pi}\sin^2\frac{\pi}{4n}-\frac{2n\lambda\mu}{\pi}\,f_3(\alpha)
			+\frac{n\lambda\mu}{2\pi}\,f_7(\alpha)\sin\frac{\pi}{n}\,,
\end{align*}
and for $i=M$ and $n\geq 5$,
\begin{align*}
  p(M) 
	= {} & P(E_{0,\,M}) + P(E_{M,\,0}) + P(E_{1,\,M-1}) + P(E_{M-1,\,1}) + \sum_{k=2}^{M-2}\,P(E_{k,\,i-k})\\
	= {} & \frac{4n(\lambda+\mu)}{\pi}\sin^2\frac{\pi}{4n}-\frac{2n\lambda\mu}{\pi}\,f_3(\alpha)
				+\frac{8n\lambda\mu}{\pi}\,f_5(\alpha)\sin\frac{\pi}{n}\\
		 & +\frac{8n\lambda\mu}{\pi}\,f_4(\alpha)\,\sum_{k=2}^{M-2}\sin\frac{k\pi}{n}\sin\frac{(M-k)\pi}{n}	
\end{align*}
with
\begin{align*}
  \sum_{k=2}^{M-2}\sin\frac{k\pi}{n}\sin\frac{(M-k)\pi}{n}
	= {} & \frac{1}{2}\,\bigg(\!-\!(M-3)\cos\frac{M\pi}{n}+\csc\frac{\pi}{n}\sin\frac{(M-3)\pi}{n}\bigg)\\
	= {} & \frac{1}{4}\,\bigg((n-5)\sin\frac{\pi}{2n}+2\csc\frac{\pi}{n}\cos\frac{5\pi}{2n}\bigg)\,.
\end{align*}
For the case $M+1\leq i\leq 2M-3$, we put $i=2M-\nu$. So we have to consider all $\nu$ with $3\leq\nu\leq M-1$. One finds
\begin{align*}
  p(2M-\nu)
	= {} & \sum_{k=(2M-\nu)-M}^M P(E_{k,\,2M-\nu-k}) = \sum_{k=M-\nu}^M P(E_{k,\,2M-\nu-k})\displaybreak[0]\\[0.2cm]
	= {} & P(E_{M-\nu,\,M}) + P(E_{M,\,M-\nu}) + P\big(E_{M-(\nu-1),\,M-1}\big)\\
		 & + P\big(E_{M-1,\,M-(\nu-1)}\big) + \sum_{k=M-(\nu-2)}^{M-2} P(E_{k,\,2M-\nu-k})\displaybreak[0]\\[0.2cm]
	= {} & \frac{8n\lambda\mu}{\pi}\,f_6(\alpha)\sin\frac{(M-\nu)\pi}{n}
				+ \frac{8n\lambda\mu}{\pi}\,f_5(\alpha)\sin\frac{[M-(\nu-1)]\pi}{n}\\
		 &		+ \frac{8n\lambda\mu}{\pi}\,f_4(\alpha)\,\sum_{k=M-(\nu-2)}^{M-2}\sin\frac{k\pi}{n}\sin\frac{(2M-\nu-k)\pi}{n} 
\end{align*}
with
\beq
 \begin{array}{l}
  \ds{\sum_{k=M-(\nu-2)}^{M-2}\sin\frac{k\pi}{n}\sin\frac{(2M-\nu-k)\pi}{n}}\\[0.55cm]
  \ds{\quad = \frac{1}{2}\bigg(\!-\!(\nu-3)\cos\frac{(2M-\nu)\pi}{n}+\csc\frac{\pi}{n}\sin\frac{(\nu-3)\pi}{n}}\bigg)\,,
 \end{array}
\eeq
and therefore, with $\nu=2M-i$,
\begin{align*}
  p(i)
	= {} & \frac{4n\lambda\mu}{\pi}\bigg[2f_6(\alpha)\sin\frac{(i-M)\pi}{n}+2f_5(\alpha)\sin\frac{(i+1-M)\pi}{n}\\
		 & -f_4(\alpha)\bigg(\!\big(2M-i-3\big)\cos\frac{i\pi}{n}-\csc\frac{\pi}{n}\sin\frac{(2M-i-3)\pi}{n}\bigg)\bigg]\,.
\end{align*}
For $i=2M-2$ and $n\geq 5$, we get
\begin{align*}
  p(2M-2)
	= {} & \sum_{k=M-2}^M P\big(E_{k,\,(2M-2)-k}\big)\\
	= {} & P(E_{M-2,\,M}) + P(E_{M,\,M-2}) + P(E_{M-1,\,M-1})\\
	= {} & \frac{8n\lambda\mu}{\pi}\,f_6(\alpha)\sin\frac{(M-2)\pi}{n} + \frac{n\lambda\mu}{2\pi}\,f_7(\alpha)\\
	= {} & \frac{n\lambda\mu}{2\pi}\bigg(16f_6(\alpha)\cos\frac{3\pi}{2n}+f_7(\alpha)\bigg)\,.
\end{align*}
Furthermore, we find
\begin{align*}
  p(2M-1)
	= {} & \sum_{k=M-1}^M P\big(E_{k,\,(2M-1)-k}\big) = P(E_{M-1,\,M}) + P(E_{M,\,M-1})\\
	= {} & \frac{n\lambda\mu}{\pi}\,f_8(\alpha)
\end{align*}
and finally
\[ p(2M) = P(E_{M,\,M}) = \frac{n\lambda\mu}{2\pi}\,f_9(\alpha)\,. \]
So, the proof is complete.
\end{proof} 

In the following, we write $p(i,\alpha)$ instead of $p(i)$ and $P_\alpha(E_{k,\,m})$ instead of $P(E_{k,\,m})$ to emphasize the dependence on $\alpha$.

\begin{satz} \label{Satz2}
For fixed values of odd $n\geq 3$, $a$, $b$ and $\ell$, the function
\[ p(i,\,\cdot\,) \,:\;\; [0,\pi/2] \;\rightarrow\; [0,1] \,,\quad \alpha\;\mapsto\; p(i,\alpha)  \]
is $\pi/n$-periodic. The restriction $p|_{[0,\,\pi/n]}$ is symmetric in relation to the line $\alpha=\pi/(2n)$.
\end{satz}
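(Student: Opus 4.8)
The plan is to prove both assertions directly from the integral representation of the probabilities furnished by the proof of Theorem~\ref{prob}, rather than from the explicit closed forms, since the latter were evaluated only for $0<\alpha\le\pi/(2n)$ whereas the integral form is valid for every $\alpha\in(0,\pi/2]$ and therefore already defines $p(i,\,\cdot\,)$ on the whole interval. Writing $u_0:=a-w$, $u_k:=s(k,\,\cdot\,)$ for $1\le k\le M$, and likewise $v_0:=b-w$, $v_m:=s(m,\,\cdot\,)$, every conditional probability reads $P_\alpha(E_{k,\,m}\,|\,\phi)=\frac{1}{ab}\,u_k(\phi)\,v_m(\phi+\alpha)$, so that
\[
  p(i,\alpha)=\sum_{\substack{k+m=i\\0\le k,m\le M}}P_\alpha(E_{k,\,m})
  =\frac{n}{\pi ab}\sum_{\substack{k+m=i\\0\le k,m\le M}}\int_0^{\pi/n}u_k(\phi)\,v_m(\phi+\alpha)\,\dd\phi .
\]
The entire $\alpha$-dependence thus sits in the shift $\phi\mapsto\phi+\alpha$ of the second factor, reflecting that the perpendicular directions of $\Ra$ and $\Rb$ differ by $\alpha$; the integral runs over one full period of the integrand.

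The key input is two symmetry properties of the profile functions $u_k$ and $v_m$. First, as already recorded in the proof of Theorem~\ref{prob}, $w$ and each $s(k,\,\cdot\,)$ are $\pi/n$-periodic, hence so are all $u_k,v_m$. Second, I claim each of them is symmetric about the midpoint $\pi/(2n)$ of a period, i.e.\ $u_k(\tfrac{\pi}{n}-\phi)=u_k(\phi)$ and $v_m(\tfrac{\pi}{n}-\phi)=v_m(\phi)$. For the generic functions this is immediate, since $w_{12}$ and $s_{12}(k,\,\cdot\,)$ are multiples of $\cos(\phi-\tfrac{\pi}{2n})$, which is visibly symmetric about $\phi=\pi/(2n)$; the cases $u_0,v_0$ inherit the property from $w$. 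For $k\in\{M-1,M\}$, where $s(k,\,\cdot\,)$ is pieced together from different formulas on $\mathcal I_1$ and $\mathcal I_2$, one checks that $\phi\mapsto\tfrac{\pi}{n}-\phi$ interchanges the two pieces; for instance, for $k=M$ one has $s_2(M,\tfrac{\pi}{n}-\phi)=\ell\sin(\tfrac{\pi}{2n}-\phi)=-\ell\sin(\phi-\tfrac{\pi}{2n})=s_1(M,\phi)$, and an analogous identity holds for $k=M-1$. Conceptually this reflection symmetry is the analytic trace of the mirror symmetry of the star $\Sn$ for odd $n$.

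Both assertions then follow formally. For the periodicity, the $\pi/n$-periodicity of $v_m$ gives $v_m(\phi+\alpha+\tfrac{\pi}{n})=v_m(\phi+\alpha)$, so each integral, hence each $P_\alpha(E_{k,\,m})$ and hence $p(i,\alpha)$, is unchanged under $\alpha\mapsto\alpha+\pi/n$. For the reflection about $\alpha=\pi/(2n)$, I substitute $\phi=\tfrac{\pi}{n}-\psi$ in
\[
  P_{\pi/n-\alpha}(E_{k,\,m})=\frac{n}{\pi ab}\int_0^{\pi/n}u_k(\phi)\,v_m\!\Big(\phi+\tfrac{\pi}{n}-\alpha\Big)\,\dd\phi ;
\]
using the $\pi/n$-periodicity of $v_m$ to bring its argument to $\tfrac{\pi}{n}-(\psi+\alpha)$ and then the two reflection symmetries $u_k(\tfrac{\pi}{n}-\psi)=u_k(\psi)$ and $v_m(\tfrac{\pi}{n}-(\psi+\alpha))=v_m(\psi+\alpha)$, the integral becomes $\int_0^{\pi/n}u_k(\psi)\,v_m(\psi+\alpha)\,\dd\psi$, i.e.\ $P_{\pi/n-\alpha}(E_{k,\,m})=P_\alpha(E_{k,\,m})$. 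Summing over the pairs $(k,m)$ with $k+m=i$ yields $p(i,\tfrac{\pi}{n}-\alpha)=p(i,\alpha)$. The main obstacle is the second property of the middle step for $k\in\{M-1,M\}$: there the piecewise definitions must be tracked with care to verify that $\phi\mapsto\tfrac{\pi}{n}-\phi$ permutes the pieces correctly, whereas once this bookkeeping is settled the remaining argument is the clean change of variables above.
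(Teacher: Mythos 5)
Your proof is correct, and the periodicity half coincides with the paper's own argument: both rest on the $\pi/n$-periodicity of $w$ and $s(k,\,\cdot\,)$ applied inside the integral representation of $P_\alpha(E_{k,\,m})$. For the reflection about $\alpha=\pi/(2n)$, however, you take a genuinely different route. The paper uses no symmetry of the breadth functions beyond periodicity: it performs the change of variables $u=\phi+\alpha$ to obtain $P_{\pi/n-\alpha}(E_{k,\,m})=P_\alpha(E_{m,\,k})$ and then observes that $p(i,\alpha)$ decomposes into pairs $P_\alpha(E_{k,\,i-k})+P_\alpha(E_{i-k,\,k})$ (plus the self-paired term when $i$ is even), so that only these pairwise sums need to be invariant. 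You instead prove that $w$ and every $s(k,\,\cdot\,)$, viewed as $\pi/n$-periodic functions, are even about $\phi=\pi/(2n)$ --- which does require the case check for $k\in\{M-1,M\}$ that you carry out, e.g.\ $s_2\big(M,\tfrac{\pi}{n}-\phi\big)=s_1(M,\phi)$, and which I have verified also for $k=M-1$ --- and you deduce the term-by-term identity $P_{\pi/n-\alpha}(E_{k,\,m})=P_\alpha(E_{k,\,m})$. Your intermediate conclusion is strictly stronger than the paper's: each $P_\alpha(E_{k,\,m})$ is individually symmetric in $\alpha$ about $\pi/(2n)$, not merely the sums $p(i,\alpha)$. (This is consistent with the paper's identity: for $k,m\geq 1$ the two together force $P_\alpha(E_{k,\,m})=P_\alpha(E_{m,\,k})$, which the explicit formulas confirm; for the index-$0$ cases no contradiction arises because $a-w$ and $b-w$ are distinct functions, both even, so $P_\alpha(E_{0,\,m})\neq P_\alpha(E_{m,\,0})$ can coexist with your identity.) The trade-off is that the paper's argument is leaner, needing nothing but periodicity, while yours buys a finer statement at the cost of the extra structural input reflecting the mirror symmetry of $\Sn$. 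Your opening remark that the integral representation, unlike the closed forms of Theorem~\ref{prob}, defines $p(i,\,\cdot\,)$ on all of $(0,\pi/2]$ is a point the paper leaves implicit and is worth making explicit.
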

\begin{proof} The functions $w$ and $s(k,\,\cdot\,)$, $1\leq k\leq M$, are $\pi/n$-periodic. For $1\leq k,m\leq M$ and $\nu\in\mathbb{Z}$ we get
\begin{align*}
  P_{\alpha\,+\,\nu\pi/n}(E_{k,\,m})
	= {} & \frac{n}{\pi ab}\int_0^{\pi/n}s(k,\phi)\,s\Big(m,\phi+\alpha+\nu\,\frac{\pi}{n}\Big)\,\dd\phi\\
	= {} & \frac{n}{\pi ab}\int_0^{\pi/n}s(k,\phi)\,s(m,\phi+\alpha)\,\dd\phi
	= P_\alpha(E_{k,\,m})\,.
\end{align*}
This result holds for all values of $k$ and $m$, $0\leq k,m\leq M$. Since $p(i,\alpha)$ is a sum of $\pi/n$-periodic functions, it is $\pi/n$-periodic.

$s(k,\phi)\,s(m,\phi+\alpha)$ are $\pi/n$-periodic functions. Hence
\begin{align*}
  P_\alpha(E_{k,\,m})
	= {} & \frac{n}{\pi ab}\int_0^{\pi/n}s(k,\phi)\,s(m,\phi+\alpha)\,\dd\phi\\
	= {} & \frac{n}{\pi ab}\int_{-\alpha}^{\pi/n-\alpha}s(k,\phi)\,s(m,\phi+\alpha)\,\dd\phi\\
	= {} & \frac{n}{\pi ab}\int_0^{\pi/n}s(k,u-\alpha)\,s(m,u)\,\dd u\,,
\end{align*}
and therefore, with $\nu\in\mathbb{Z}$,
\begin{align*}
  P_{\nu\pi/n-\alpha}(E_{k,\,m})
	= {} & \frac{n}{\pi ab}\int_0^{\pi/n}s\Big(k,u-\Big(\nu\,\frac{\pi}{n}-\alpha\Big)\Big)\,s(m,u)\,\dd u\\
	= {} & \frac{n}{\pi ab}\int_0^{\pi/n}s(k,u+\alpha)\,s(m,u)\,\dd u
	= P_\alpha(E_{m,\,k})\,,
\end{align*}
For $1\leq k,m\leq M$, it follows that
\beq
  \begin{array}{l}
	P_{\nu\pi/n-\alpha}(E_{k,\,k}) = P_\alpha(E_{k,\,k})\,,\\[0.15cm] 
	P_{\nu\pi/n-\alpha}(E_{k,\,m})+P_{\nu\pi/n-\alpha}(E_{m,\,k}) = P_\alpha(E_{k,\,m})+P_\alpha(E_{m,\,k})\,.
  \end{array}
\eeq
Analogously, one gets
\beq
  \begin{array}{l}
	P_{\nu\pi/n-\alpha}(E_{0,\,0}) = P_\alpha(E_{0,\,0})\,,\\[0.15cm] 
	P_{\nu\pi/n-\alpha}(E_{0,\,m})+P_{\nu\pi/n-\alpha}(E_{m,\,0}) = P_\alpha(E_{0,\,m})+P_\alpha(E_{m,\,0})\,.
  \end{array}
\eeq
With $\nu=1$, we have
\beq
  \begin{array}{c@{\;=\;}c@{\;=\;}c@{\;=\;}c}
	p(0,\pi/n-\alpha)  & P_{\pi/n-\alpha}(E_{0,\,0}) & P_\alpha(E_{0,\,0}) & p(0,\alpha)\,,\\[0.15cm]
	p(2M,\pi/n-\alpha) & P_{\pi/n-\alpha}(E_{M,\,M}) & P_\alpha(E_{M,\,M}) & p(2M,\alpha)\,.
  \end{array}	
\eeq
For $1\leq i\leq 2M-1$, we find: If $i$ is odd, $p(i,\alpha)$ is the sum of terms $P_\alpha(E_{k,\,i-k})+P_\alpha(E_{i-k,\,k})$. If $i$ is even, $p(i,\alpha)$ is the sum of terms $P_\alpha(E_{k,\,i-k})+P_\alpha(E_{i-k,\,k})$ and one term $P(E_{i/2,\,i/2})$.

So, for every $i$, $0\leq i\leq 2M$, we have $p(i,\pi/n-\alpha)=p(i,\alpha)$; therefore, the restriction $p(i,\alpha)|_{[0,\,\pi/n]}$ is symmetric in relation to the line $\alpha=\pi/(2n)$.   
\end{proof}
\noindent From Theorem \ref{Satz2} one easily gets the following corollary:
\begin{coro}
The probabilities $p(i,\alpha)$ for $0<\alpha\leq\pi/2$ are given by
\beq
  p(i,\alpha) = \left\{\begin{array}{ll}
	p(i,\alpha-\delta(\alpha)) & \mbox{if $\;\,\alpha-\delta(\alpha)\leq\dfrac{\pi}{2n}$}\,,\\[0.3cm]
	p\bigg(i\,,\,\dfrac{\pi}{n}-\big[\alpha-\delta(\alpha)\big]\bigg) & \mbox{if $\;\,\alpha-\delta(\alpha)>\dfrac{\pi}{2n}$}
\end{array}\right.
\eeq
with
\[ \delta(\alpha) = \bigg\lfloor\frac{n\alpha}{\pi}\bigg\rfloor\,\frac{\pi}{n}\,. \]  
\end{coro}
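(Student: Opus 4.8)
The plan is to reduce an arbitrary angle $\alpha\in(0,\pi/2]$ to the fundamental interval $[0,\pi/(2n)]$ by successively applying the two structural properties established in Theorem~\ref{Satz2}: the $\pi/n$-periodicity of $p(i,\cdot)$ and its reflection symmetry about the line $\alpha=\pi/(2n)$. Once the argument is brought into $[0,\pi/(2n)]$, the explicit formula of Theorem~\ref{prob} (whose validity is restricted to $0<\alpha\leq\pi/(2n)$) applies, which is the whole purpose of the corollary.

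First I would note that $\delta(\alpha)=\lfloor n\alpha/\pi\rfloor\,\pi/n$ is by definition the largest integer multiple of $\pi/n$ not exceeding $\alpha$, so that $\beta:=\alpha-\delta(\alpha)$ lies in the half-open interval $[0,\pi/n)$. Since $\delta(\alpha)$ is an integer multiple of $\pi/n$, the $\pi/n$-periodicity from Theorem~\ref{Satz2}, applied with $\nu=-\lfloor n\alpha/\pi\rfloor$, gives at once $p(i,\alpha)=p(i,\beta)$. This single step removes the periodic part and reduces everything to a value $\beta$ in one fundamental period.

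Next I would split on the position of $\beta$ within $[0,\pi/n)$. If $\beta\leq\pi/(2n)$, then $\beta$ already lies in the fundamental interval and the first branch $p(i,\alpha)=p(i,\alpha-\delta(\alpha))$ follows immediately. If instead $\beta>\pi/(2n)$, I would invoke the symmetry of $p(i,\cdot)|_{[0,\pi/n]}$ about $\alpha=\pi/(2n)$, namely $p(i,\pi/n-\beta)=p(i,\beta)$. Because $\beta\in(\pi/(2n),\pi/n)$, the reflected argument $\pi/n-\beta$ lies in $(0,\pi/(2n))$; substituting $\beta=\alpha-\delta(\alpha)$ then yields the second branch $p(i,\alpha)=p(i,\pi/n-[\alpha-\delta(\alpha)])$.

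I do not expect a genuine obstacle here: the corollary is a direct bookkeeping consequence of Theorem~\ref{Satz2}. The only points that require care are the endpoint conventions---verifying that $\beta$ falls in the half-open interval $[0,\pi/n)$ so that the two cases partition its range, and checking that the boundary value $\beta=\pi/(2n)$ (assigned to the first branch) is consistent, which it is since there the symmetry reads $p(i,\pi/(2n))=p(i,\pi/(2n))$ trivially. One should also confirm that $\lfloor n\alpha/\pi\rfloor$ indeed selects the correct multiple, so that in both branches the reduced argument lands in $[0,\pi/(2n)]$ where the formula of Theorem~\ref{prob} may be evaluated.
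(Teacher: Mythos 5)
Your argument is correct and is exactly the intended one: the paper states the corollary without proof as an immediate consequence of Theorem~\ref{Satz2}, and your two-step reduction (first the $\pi/n$-periodicity with $\nu=-\lfloor n\alpha/\pi\rfloor$ to land in $[0,\pi/n)$, then the reflection about $\pi/(2n)$ for the second branch) is precisely that consequence. The endpoint bookkeeping you note is handled correctly, so nothing is missing.
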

$p(0,\alpha)$ is strictly decreasing for $0<\alpha<\frac{\pi}{2n}$, which can be seen as follows: We denote by $f_0^*(\alpha)$ the restriction of $f_0(\alpha)$ to the intervall $[0,\frac{\pi}{n})$. It may be written as
\[ f_0^*(\alpha) = 2\cos^2\frac{\pi}{2n}\bigg[\sin\alpha+\sin\!\bigg(\frac{\pi}{n}-\alpha\bigg)
		+\alpha\cos\!\bigg(\frac{\pi}{n}-\alpha\bigg)+\bigg(\frac{\pi}{n}-\alpha\bigg)\cos\alpha\bigg]\,. \]
One finds
\[ {f_0^*\:\!}'(\alpha)
	= \frac{\dd}{\dd\alpha}\,f_0^*(\alpha)
	= 2\cos^2\frac{\pi}{2n}\,\bigg[\alpha\sin\!\bigg(\frac{\pi}{n}-\alpha\bigg)-\bigg(\frac{\pi}{n}-\alpha\bigg)\sin\alpha\bigg] \]
and hence
\[ \frac{{f_0^*\:\!}'(\alpha)}{\sin\alpha\,\sin\big(\frac{\pi}{n}-\alpha\big)}
	= 2\cos^2\frac{\pi}{2n}\,\bigg(\frac{\alpha}{\sin\alpha}-\frac{\frac{\pi}{n}
	-\alpha}{\sin\big(\frac{\pi}{n}-\alpha\big)}\bigg)\,. \]
For $0<\alpha\leq\frac{\pi}{2n}$, we have $\alpha\leq\frac{\pi}{n}-\alpha$, and therefore,
\[ \frac{\alpha}{\sin\alpha}-\frac{\frac{\pi}{n}-\alpha}{\sin\big(\frac{\pi}{n}-\alpha\big)}\leq 0\,. \]
It follows that ${f_0^*\:\!}'(\alpha)\leq 0$ and hence $p(0,\alpha)\leq 0$ in $0<\alpha\leq\frac{\pi}{2n}$, where the equality signs hold only if $\alpha=\frac{\pi}{2n}$. Due to the symmetry of $p(0,\alpha)$ (see Theorem \ref{Satz2}), $p(0,\alpha)$ is strictly increasing in $\frac{\pi}{2n}<\alpha<\frac{\pi}{n}$. Therefore, the probability of at least one intersection is strictly increasing in $0<\alpha<\frac{\pi}{2n}$ and strictly decreasing in $\frac{\pi}{2n}<\alpha<\frac{\pi}{n}$ (cp.\ \cite{Baesel4}).

Due to its additivity, the expectation $\sum_{i=0}^{2M}\,i\,p(i,\alpha)$ of the number of intersections is always given by $2n(\lambda+\mu)/\pi$.  

\section{Special cases}

The probability of at least one intersection is given by
\[ \frac{2n(\lambda+\mu)}{\pi}\,\sin\frac{\pi}{n}-\frac{n\lambda\mu}{\pi}\,f_0(\alpha)\,. \]
This is one result of Theorem 2.1 in \cite{Baesel4}.

For $\mu=0$ one gets the result for one lattice $\Ra$ of parallel lines in $\mbox{\cite[pp.~17-18]{Baesel1}}$. 

$\mbox{Fig.}\;\ref{p1},\ldots,\ref{p6}$ show diagrams with the intersection probabilities $p(i,\alpha)$, $0\leq\alpha\leq\pi/n$, for $\lambda=1/3$, $\mu=1/4$ and $n=5$.    

\begin{figure}[h]
  \begin{minipage}[b]{.5\linewidth} 
    \includegraphics[scale=0.7]{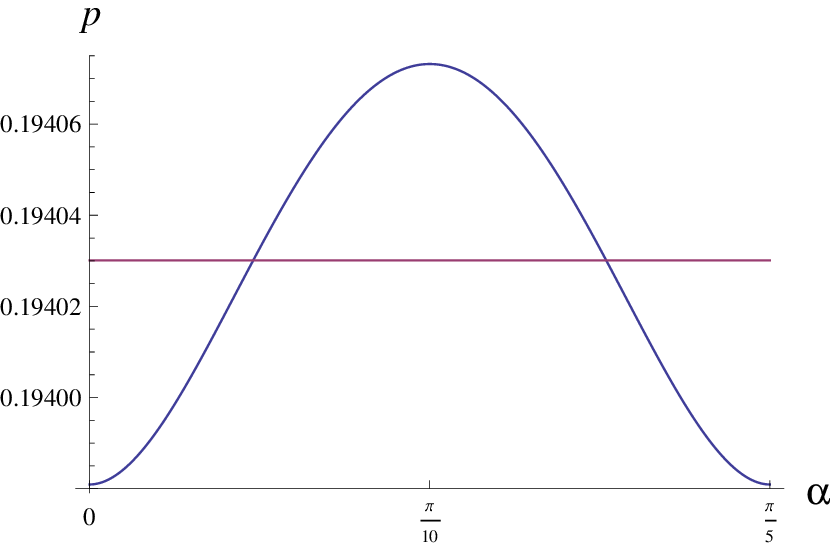}
    \vspace{-0.3cm}
	\caption{\label{p1} $p(1,\alpha)$}
	\vspace{1cm}
  \end{minipage} \vspace{0cm}
  \hspace{0.01\linewidth}
  \begin{minipage}[b]{.5\linewidth} 
    \includegraphics[scale=0.7]{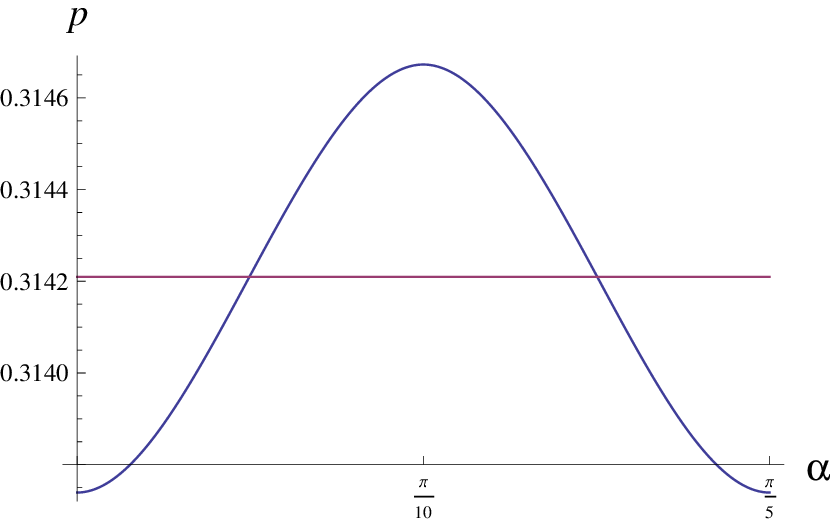}
    \vspace{-0.3cm}
	\caption{$p(2,\alpha)$}
	\vspace{1cm}
  \end{minipage}
  
  \begin{minipage}[b]{.5\linewidth} 
    \includegraphics[scale=0.7]{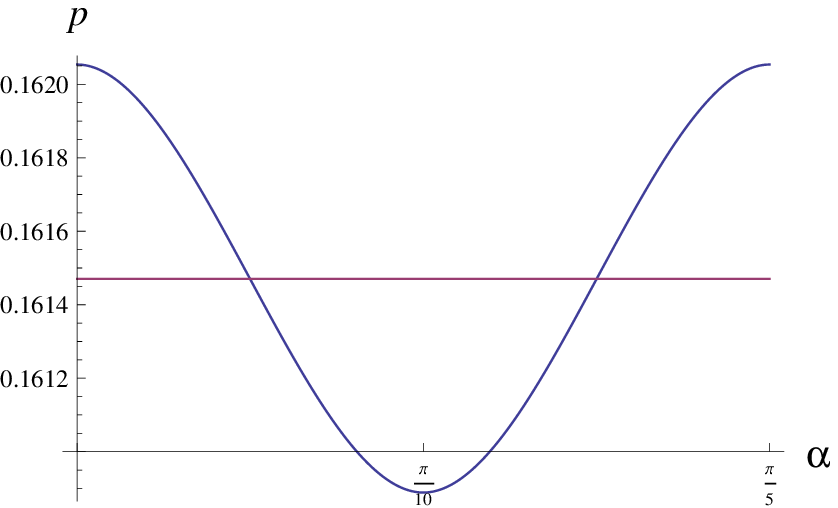}
    \vspace{-0.3cm}
	\caption{$p(3,\alpha)$} 
    \vspace{1.2cm}
  \end{minipage} \vspace{0cm}
  \hspace{0.01\linewidth}
  \begin{minipage}[b]{.5\linewidth} 
    \includegraphics[scale=0.7]{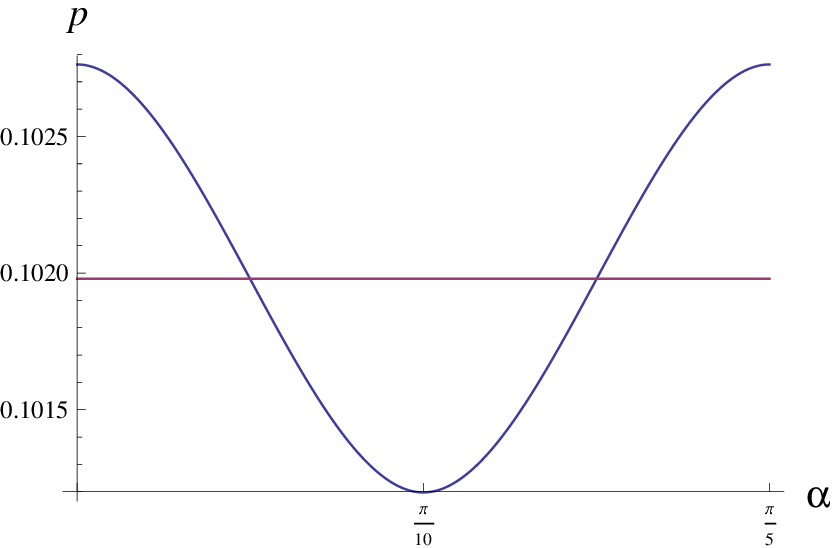}
    \vspace{-0.3cm}
	\caption{$p(4,\alpha)$}
	\vspace{1.2cm}
  \end{minipage}
  
  \begin{minipage}[b]{.5\linewidth} 
    \includegraphics[scale=0.7]{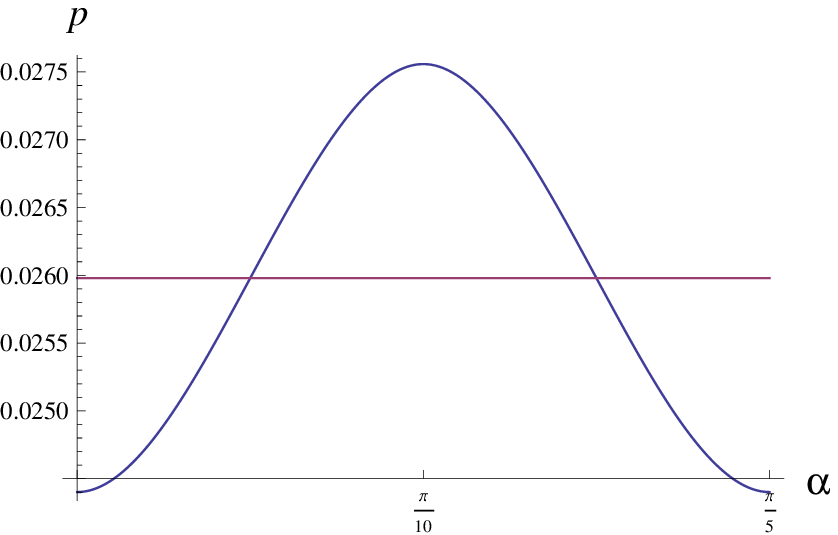}
    \vspace{-0.3cm}
	\caption{$p(5,\alpha)$} 
  \end{minipage} \vspace{0cm}
  \hspace{0.01\linewidth}
  \begin{minipage}[b]{.5\linewidth} 
    \includegraphics[scale=0.7]{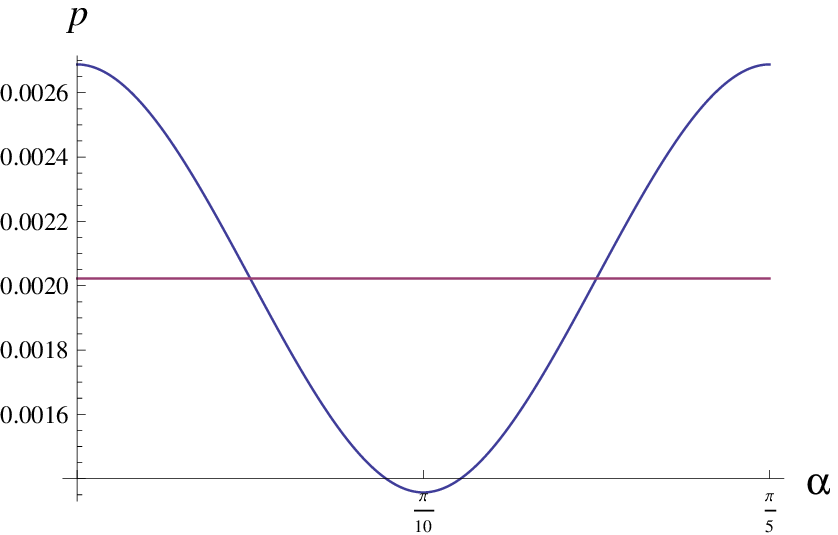}
    \vspace{-0.3cm}
	\caption{\label{p6} $p(6,\alpha)$}
  \end{minipage}
\end{figure}

Using the formulas in Theorem \ref{prob}, we get the following approximate expressions in the case $n=5$:
\begin{align*}
  p(0,\alpha) \approx {} & 1-0,87098(\lambda+\mu)+c_0\lambda\mu\\
  p(1,\alpha) \approx {} & 0,71465(\lambda+\mu)-c_1\lambda\mu\displaybreak[0]\\
  p(2,\alpha) \approx {} & 1,00054(\lambda+\mu)-c_2\lambda\mu\displaybreak[0]\\ 
  p(3,\alpha) \approx {} & 0,155792(\lambda+\mu)+c_3\lambda\mu\displaybreak[0]\\
  p(i,\alpha) \approx {} & c_i\lambda\mu\,,\quad i=4,\,5,\,6, 
\end{align*}
with
\beq
\begin{array}{llll}
  c_0=3,50133\,, & c_1=2,67478\,,  & c_2=3,23888\,, & c_3=0,854102\,,\\
  c_4=1,23316\,, & c_5=0,292814\,, & c_6=0,0322554 
\end{array}  
\eeq
if $\alpha=0,\,\pi/5,\,2\pi/5$, and
\beq
\begin{array}{llll}
  c_0=3,49988\,, & c_1=2,67367\,,  & c_2=3,22768\,, & c_3=0,840122\,,\\
  c_4=1,21437\,, & c_5=0,330696\,, & c_6=0,0162876 
\end{array}  
\eeq
if $\alpha=\pi/10,\,3\pi/10,\,\pi/2$.

From the calculation of many special cases, we conjecture that $p(i,\alpha)$ is strictly increasing in $0<\alpha<\frac{\pi}{2n}$ if $i\in\{1,\ldots,M-1\}$ or $i=2M-1$, and strictly decreasing in this intervall if $i\in\{M,\ldots,2M-2\}$ or $i=2M$.

\clearpage

\section{Distribution functions}

In the following, let $X_{n,\,\alpha}$ denote the ratio
\[ \frac{\mbox{number of intersections between $\Sn$ and $\R$}}{n} \]
(short: {\em relative number of intersections})
and $F_{n,\,\alpha}:\,\mathbb{R}\rightarrow[0,1]$ the distribution function of $X_{n,\,\alpha}$,
\begin{equation*}
  F_{n,\,\alpha}(\xi)\;=\;P(X_{n,\,\alpha}\leq\xi)\;=\;\left\{
  \begin{array}{lll}
	0 & \mbox{for} & -\infty<\xi<0\,,\\[0.1cm]
	\displaystyle{\sum_{i=0}^{\lfloor n\:\!\xi\rfloor}p(i,\alpha)} & \mbox{for} & 0\leq\xi<\dfrac{2M}{n}\,,\\
	1 & \mbox{for} & \dfrac{2M}{n}\leq\xi<\infty\,.
  \end{array} \right.
\end{equation*}
We put
\begin{align*}
  X_{n,\,\lambda} := {} & \frac{\mbox{number of intersections between $\Sn$ and $\Ra$}}{n} \quad\mbox{and}\\
  X_{n,\,\mu}	  := {} & \frac{\mbox{number of intersections between $\Sn$ and $\Rb$}}{n}\,.
\end{align*}
In the case of the independence of $X_{n,\,\lambda}$ and $X_{n,\,\mu}$, the distribution function $F_n$ of $X_n:=X_{n,\,\lambda}+X_{n,\,\mu}$ is given by
\begin{equation*} \label{FXn}
  F_n(\xi)\;=\;P(X_n\leq\xi)\;=\;\left\{
  \begin{array}{lll}
	0 & \mbox{for} & -\infty<\xi<0\,,\\[0.1cm]
	\displaystyle{\sum_{i=0}^{\lfloor n\:\!\xi\rfloor}\sum_{k=0}^i p_\lambda(k)\:p_\mu(i-k)} & \mbox{for} & 0\leq\xi<\dfrac{2M}{n}\,,\\
	1 & \mbox{for} & \dfrac{2M}{n}\leq\xi<\infty\,,
  \end{array} \right.
\end{equation*}
where
\beq
  p_\lambda(i):=\left\{
  \begin{array}{clc}
	p(i,\alpha) \;, & \mbox{if} & 0\leq i\leq M\;,\\[0.2cm]
	0 \;, & \mbox{if} & M+1\leq i\leq 2M\;,
  \end{array}\right.
\eeq
if $\mu=0$ and $\lambda\not=0$, and
\beq
  p_\mu(i):=\left\{
  \begin{array}{clc}
	p(i,\alpha) \;, & \mbox{if} & 0\leq i\leq M\;,\\[0.2cm]
	0 \;, & \mbox{if} & M+1\leq i\leq 2M\;,
  \end{array}\right.
\eeq
if $\lambda=0$ and $\mu\not=0$.

The horizontal lines in the diagrams in $\mbox{Fig.}\;\ref{p1},\ldots,\ref{p6}$ show the values of the probabilities
\[ p^*(i) = \sum_{k=0}^i p_\lambda(k)\:p_\mu(i-k)\,. \]

The question arise if an angle $\alpha$ exist such that $F_n\equiv F_{n,\,\alpha}$. The calculation of many examples shows that it is (in general) not possible to find such a value of $\alpha$ for finite $n$. Therefore, $X_{n,\,\lambda}$ and $X_{n,\,\mu}$ are (in general) dependent random variables.

The random variables $X_{n,\,\lambda}$ and $X_{n,\,\mu}$ converge uniformly to the random variables $X_\lambda$ with distribution function
\beq
  F_\lambda(\xi) = \left\{
  \begin{array}{llc}
	0 & \mbox{for} & -\infty<\xi<0\,,\\[0.15cm]
	1-2\lambda\cos\pi\xi  & \mbox{for} & 0\leq\xi<\frac{1}{2}\,,\\[0.15cm]
	1 & \mbox{for} &  \frac{1}{2}\leq\xi<\infty\,,
  \end{array}\right.  
\eeq
and
$X_\mu$ with distribution function
\beq
  F_\mu(\xi) = \left\{
  \begin{array}{llc}
	0 & \mbox{for} & -\infty<\xi<0\,,\\[0.15cm]
	1-2\mu\cos\pi\xi  & \mbox{for} & 0\leq\xi<\frac{1}{2}\,,\\[0.15cm]
	1 & \mbox{for} &  \frac{1}{2}\leq\xi<\infty\,,
  \end{array}\right.  
\eeq
respectively (see \cite[p. 24]{Baesel1}). If $X_\lambda$ and $X_\mu$ are independent, the distribution of $X_\lambda+X_\mu$ can be calculated with the convolution
\[ F(\xi) = P(X_\lambda+X_\mu\leq\xi) =  \int_{-\infty}^\infty F_\lambda(\xi-\eta)\,\dd F_\mu(\eta)
		\quad\mbox{see \cite[p. 90]{BFWW}}\,, \]    
which yields 
\begin{equation} \label{F}
  F(\xi)=\left\{
  \begin{array}{llc}
	0 & \mbox{for} & -\infty<\xi<0 \,,\\[0.2cm]
	1-2(\lambda+\mu)\cos\pi\xi\\[0.2cm]
	\quad -\,2\lambda\mu(\pi\xi\sin\pi\xi-2\cos\pi\xi) & \mbox{for} & 0\leq\xi<\frac{1}{2}\,,\\[0.2cm]
	1-2\lambda\mu\,\pi(1-\xi)\sin\pi\xi & \mbox{for} & \frac{1}{2}\leq\xi<1\,,\\[0.2cm]
	1 & \mbox{for} & 1\leq\xi<\infty\,,  
  \end{array} \right\}
\end{equation}
$\mbox{(cf. \cite{Baesel5})}$. The following theorem shows that $F$ is not only the distribution function of the sum $X_\lambda+X_\mu$ but also of the random variable $X:=\lim_{n\rightarrow\infty}X_{n,\,\alpha}$. Therefore, $X_{n,\,\lambda}$ and $X_{n,\,\mu}$ are asymptotically independent.

\begin{satz}
As $n\rightarrow\infty$, the random variables $X_{n,\,\alpha}$ converge to the random variable $X$ whose distribution function is given by formula \eqref{F}.
\end{satz}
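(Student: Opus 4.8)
The plan is to put all the $X_{n,\alpha}$ and the limit $X$ on one probability space and to prove pointwise (hence almost-sure) convergence; convergence in distribution then follows, and the limit law is the convolution $\Fl*\Fm=F$ already computed above. As sample space I take the centre of $\Sn$, uniformly distributed on $\F$, together with the independent angle $\phi$, uniform on $[0,2\pi)$, and I define every $X_{n,\alpha}=\Xnl+\Xnm$ on it; here I use that, off a null set, the number of intersections with $\R=\Ra\cup\Rb$ is the sum of the numbers of intersections with $\Ra$ and with $\Rb$.

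First I would pass to adapted coordinates. For a centre $(x,y)$ set $u:=(x\sin\alpha-y\cos\alpha)/a$ and $v:=y/b$, both read modulo $1$: these are the normalised perpendicular positions of the centre relative to $\Ra$ and to $\Rb$. The affine map $(x,y)\mapsto(u,v)$ has constant Jacobian $\sin\alpha/(ab)$ and sends the fundamental cell $\F$ bijectively (up to a null set) onto $[0,1)^2$; since $1/\mathrm{Area}\,\F=\sin\alpha/(ab)$, it carries the uniform law on $\F$ to the uniform law on $[0,1)^2$. Hence $u$ and $v$ are independent and each uniform on $[0,1)$, and crucially this holds for every $\alpha\in(0,\pi/2]$. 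This is the structural reason the limit will not depend on $\alpha$.

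Next I would evaluate the two relative counts geometrically. Writing $\Xnl=\frac1n\sum_{k=0}^{n-1}\mathbf 1[\text{needle }k\text{ meets }\Ra]$ and using $\ell<a$ (so each needle meets at most one line of $\Ra$), the $k$-th needle, pointing in direction $\phi+2\pi k/n$ relative to the $\Ra$-perpendicular from a centre at normalised height $u$, meets $\Ra$ exactly when $\cos(\phi+2\pi k/n)\le-u/\lambda$ or $\cos(\phi+2\pi k/n)\ge(1-u)/\lambda$. Thus $\Xnl$ is an equally spaced sampling average of the $2\pi$-periodic indicator of the arc-set $A_\lambda(u):=\{\theta:\cos\theta\le-u/\lambda\ \text{or}\ \cos\theta\ge(1-u)/\lambda\}$, which depends on $u$ alone. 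As this set is a finite union of arcs, the sampling average converges, for every fixed $\phi$, to $|A_\lambda(u)|/(2\pi)=:\Xl(u)$, namely $\frac1\pi\arccos(u/\lambda)$ on $[0,\lambda]$, $\frac1\pi\arccos((1-u)/\lambda)$ on $[1-\lambda,1)$, and $0$ in between; integrating over the uniform $u$ reproduces $P(\Xl\le\xi)=1-2\lambda\cos\pi\xi$, i.e.\ the law $\Fl$. The identical argument with $v$ and the offset angle $\phi+\alpha$ gives $\Xnm\to\Xm(v)$ with law $\Fm$.

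Combining the steps, $X_{n,\alpha}=\Xnl+\Xnm\to\Xl(u)+\Xm(v)=:X$ pointwise off a null set, so $X_{n,\alpha}\to X$ almost surely and therefore in distribution. Since $u$ and $v$ are independent, $\Xl(u)$ and $\Xm(v)$ are independent with laws $\Fl$ and $\Fm$, so $X$ has the convolution law $\Fl*\Fm$, which is exactly $F$ in \eqref{F}. I expect the hard part to be the third step: justifying that the equally spaced sampling average of the (discontinuous but Riemann-integrable) arc-indicator converges to the arc length for every fixed offset $\phi$, controlling the null set of centre/angle configurations where a needle grazes a line, and confirming that the hitting region is genuinely a function of $u$ alone so that the randomness of $\phi$ is completely washed out in the limit.
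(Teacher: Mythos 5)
Your proof is correct, but it takes a genuinely different route from the paper's. The paper asserts (without a detailed equidistribution argument) that for a fixed centre the relative count tends to $(\sigma+\tau)/(2\pi)$, where $\sigma,\tau$ are the angular measures of hitting directions for $\Ra,\Rb$, and then obtains $F(\xi)$ by an explicit area computation in the fundamental cell $\F$: it identifies the sublevel region $\F^*(\xi)$ bounded by the curves $C_1,C_2$, integrates to find the corner areas $A_1,A_2$ (and $A_1',A_2'$ for $\tfrac12\le\xi<1$), and uses Cavalieri's principle, treating the two ranges of $\xi$ separately. You instead (i) justify the limiting step rigorously, writing each relative count as an equally spaced Riemann-sum average of an arc indicator and letting the sampling error $O(1/n)$ vanish uniformly in $\phi$, and (ii) replace the area computation by the structural observation that the normalised perpendicular coordinates $u=(x\sin\alpha-y\cos\alpha)/a$ and $v=y/b$ are \emph{independent} uniforms on $[0,1)$ for every $\alpha$ (constant Jacobian $\sin\alpha/(ab)$ onto the unit square), so the limit is $\Xl(u)+\Xm(v)$ with independent summands and law $\Fl*\Fm$, which is exactly the convolution the paper has already evaluated as \eqref{F}. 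Your version buys a cleaner explanation of the two facts the paper only remarks on afterwards --- the independence of $F$ from $\alpha$ and the asymptotic independence of $\Xnl$ and $\Xnm$ --- and it firms up the limiting step; the paper's version buys a self-contained derivation of the explicit piecewise formula for $F$ without leaning on the separately computed convolution. The only dependency you should make explicit is that your argument proves convergence in distribution via an almost-sure coupling and then \emph{cites} \eqref{F} as the convolution $\Fl*\Fm$, so the correctness of that convolution computation (done before the theorem, with reference to \cite{Baesel5}) is an input to your proof rather than a byproduct of it.
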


\begin{proof}
For fixed coordinates $(x,y)$ of the centre point of $\Sn$, the relative number of intersections tends to $\xi=(\sigma+\tau)/(2\pi)$ as $n\rightarrow\infty$ (see Fig.~\ref{Bild4}). The outer parallelogram is the fundamental cell $\F$. $\sigma=\sigma(x,y)$ is the angle of possible intersections with $\Ra$, and $\tau=\tau(x,y)$ the angle of possible intersections with $\Rb$.

\begin{figure}[h]
  \vspace{0cm}
  \begin{center}
    \includegraphics[scale=1]{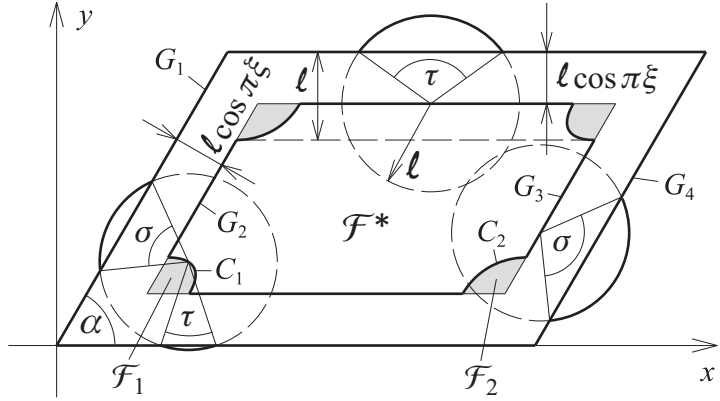}
  \end{center}
  \vspace{-0.2cm}
  \caption{\label{Bild4} Calculation of $F$ for $0\leq\xi<\frac{1}{2}$}
\end{figure}

At first we consider the situation for fixed value of $\xi$ with $0\leq\xi<1/2$. The relative number of intersections is equal to $\xi$ if the centre point of $\Sn$ with $n\rightarrow\infty$ lies on the boundary curve of the set  $\F^*\!\subset\F$; it is $<\xi$ if the centre point lies inside $\F^*$. ($\F^*$ is the inner parallelogram without the four grey coloured sets in its corners.) We denote by $A_1$ and $A_2$ the areas of $\F_1$ and $\F_2$ respectively. Therefore, the limit distribution is given by
\begin{align*}
  F(\xi)
	= {} & \frac{\mbox{Area\;$\F^*(\xi)$}}{\mbox{Area\;$\F$}}
	= \frac{(a-2\ell\cos\pi\xi)(b-2\ell\cos\pi\xi)/\sin\alpha-2(A_1+A_2)}{ab/\sin\alpha}\displaybreak[0]\\[0.2cm]
	= {} & \frac{\big[ab-2\ell(a+b)\cos\pi\xi-4\ell^2\cos^2\pi\xi\big]-2(A_1+A_2)\sin\alpha}{ab}\displaybreak[0]\\
	= {} & 1-2(\lambda+\mu)\cos\pi\xi+4\lambda\mu\cos^2\pi\xi-\frac{2(A_1+A_2)\sin\alpha}{ab}\,.	 
\end{align*}
In the following, we need the equations of the lines $G_1,\ldots,G_4$. They are respectively defined in Hesse normal form by
\begin{align*}
  G_1 = {} & \{(x,y)\in\mathbb{R}^2 \;|\; x\sin\alpha-y\cos\alpha = 0\}\,,\\
  G_2 = {} & \{(x,y)\in\mathbb{R}^2 \;|\; x\sin\alpha-y\cos\alpha-\ell\cos\pi\xi = 0\}\,,\displaybreak[0]\\
  G_3 = {} & \{(x,y)\in\mathbb{R}^2 \;|\; x\sin\alpha-y\cos\alpha-(a-\ell\cos\pi\xi) = 0\}\,,\displaybreak[0]\\
  G_4 = {} & \{(x,y)\in\mathbb{R}^2 \;|\; x\sin\alpha-y\cos\alpha-a = 0\}\,.
\end{align*}
The subset $\F_1\subset\F$ is given by
\[ \F_1 = \{(x,y)\in\mathbb{R}^2\:|\;\ell\cos\pi\xi\leq y\leq\ell\,,\;g_2(y)\leq x\leq f_1(y)\}\,, \]
where
\[ g_2(y) = \frac{1}{\sin\alpha}\,\big(y\cos\alpha + \ell\cos\pi\xi\big) \]
is the equation of $G_2$, and $f_1(y)$ the equation of the curve $C_1$. We get the equation of $C_1$ from
\beq
  \xi = \frac{\sigma+\tau}{2\pi} = \frac{1}{\pi}\,\bigg(\!\arccos\frac{x\sin\alpha-y\cos\alpha}{\ell}
			+ \arccos\frac{y}{\ell}\bigg)
\eeq
which yields
\beqn \label{f1(y)}
  f_1(y) = \frac{1}{\sin\alpha}\bigg[\ell\cos\bigg(\!\pi\xi-\arccos\frac{y}{\ell}\bigg)+y\cos\alpha\bigg]\,;
\eeqn
therefore,
\begin{align*}
  f_1(y)-g_2(y)
	= {} & \frac{\ell}{\sin\alpha}\bigg[\cos\bigg(\!\pi\xi-\arccos\frac{y}{\ell}\bigg)-\cos\pi\xi\bigg]\,.
\end{align*}
So the area of $\F_1$ is given by
\begin{align*}
  A_1
	= {} & \int_{\ell\cos\pi\xi}^\ell\big[f_1(y)-g_2(y)\big]\,\dd y\\
	= {} & \frac{\ell}{\sin\alpha}\,\underbrace{\int_{\ell\cos\pi\xi}^\ell\cos\bigg(\!\pi\xi-\arccos\frac{y}{\ell}\bigg)\,\dd y}_
				{\ds{=:I}}
			\;-\;\frac{\ell\cos\pi\xi}{\sin\alpha}\int_{\ell\cos\pi\xi}^\ell\dd y\,. 
\end{align*}
We calculate the integral $I$. With the substitution $u=y/\ell$, one finds
\begin{align*}
  I	= {} & \ell\int_{\cos\pi\xi}^1\cos(\pi\xi-\arccos u)\,\dd u\displaybreak[0]\\
	= {} & \ell\;\bigg[\cos\pi\xi\int_{\cos\pi\xi}^1\cos(\arccos u)\,\dd u 
				+ \sin\pi\xi\int_{\cos\pi\xi}^1\sin(\arccos u)\,\dd u\bigg]\displaybreak[0]\\
	= {} & \ell\;\bigg[\cos\pi\xi\int_{\cos\pi\xi}^1 u\,\dd u 
				+ \sin\pi\xi\int_{\cos\pi\xi}^1\sqrt{1-u^2}\,\dd u\bigg]\\
	= {} & \frac{\ell}{2}\,\Big[u^2\cos\pi\xi+\Big(u\sqrt{1-u^2}+\arcsin u\Big)\sin\pi\xi\Big]_{\cos\pi\xi}^1
	= \frac{\ell}{2}\,\pi\xi\sin\pi\xi			
\end{align*}
and hence
\[ A_1 = \frac{\ell^2}{2\sin\alpha}\,\big(\pi\xi\sin\pi\xi-2\cos\pi\xi+2\cos^2\pi\xi\big)\,. \]
Now we calculate the area $A_2$ of
\[ \F_2 = \{(x,y)\in\mathbb{R}^2\:|\;\ell\cos\pi\xi\leq y\leq\ell\,,\;f_2(y)\leq x\leq g_3(y)\}\,, \]
where
\[ g_3(y) = \frac{1}{\sin\alpha}\,(a+y\cos\alpha-\ell\cos\pi\xi) \]
is the equation of the line $G_3$, and $f_2(y)$ the equation of the curve $C_2$.
One gets the equation of $C_2$ from
\beq
  \xi = \frac{\sigma+\tau}{2\pi} = \frac{1}{\pi}\,\bigg(\!\arccos\frac{-(x\sin\alpha-y\cos\alpha-a)}{\ell}
			+ \arccos\frac{y}{\ell}\bigg)
\eeq
which gives
\beqn \label{f2(y)}
  f_2(y) = \frac{1}{\sin\alpha}\bigg[a+y\cos\alpha-\ell\cos\bigg(\!\pi\xi-\arccos\frac{y}{\ell}\bigg)\bigg]\,,
\eeqn
and hence
\begin{align*}
  g_3(y)-f_2(y)
	= {} & \frac{\ell}{\sin\alpha}\bigg[\cos\bigg(\!\pi\xi-\arccos\frac{y}{\ell}\bigg)-\cos\pi\xi\bigg] = f_1(y)-g_2(y) \,.
\end{align*}
Due to Cavallieri's principle, we have found that $A_2=A_1$; therefore,
\begin{align*}
  \frac{2(A_1+A_2)\sin\alpha}{ab}
	= {} & \frac{2\ell^2(\pi\xi\sin\pi\xi-2\cos\pi\xi+2\cos^2\pi\xi)}{ab}\\[0.2cm]
	= {} & 2\lambda\mu(\pi\xi\sin\pi\xi-2\cos\pi\xi+2\cos^2\pi\xi)
\end{align*}
and
\begin{align*}
  F(\xi)
	= {} & 1-2(\lambda+\mu)\cos\pi\xi+4\lambda\mu\cos^2\pi\xi-2\lambda\mu\,\big(\pi\xi\sin\pi\xi\\
		 & -2\cos\pi\xi+2\cos^2\pi\xi\big)\\
	= {} & 1-2(\lambda+\mu)\cos\pi\xi-2\lambda\mu\,\big(\pi\xi\sin\pi\xi-2\cos\pi\xi\big)\,.	 
\end{align*}

\begin{figure}[h]
  \vspace{0cm}
  \begin{center}
    \includegraphics[scale=1]{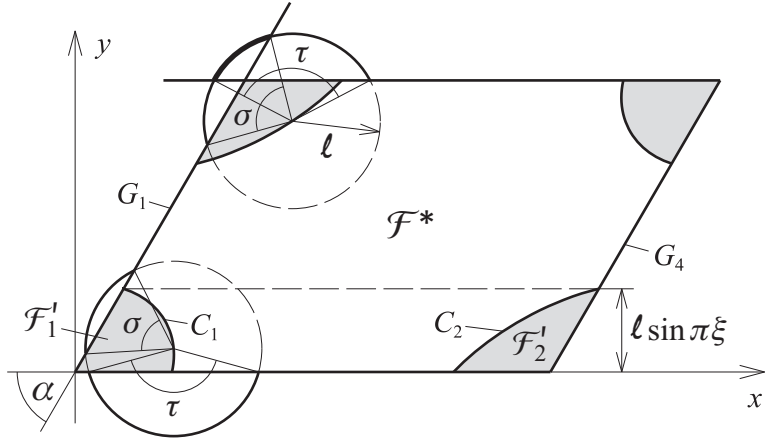}
  \end{center}
  \vspace{-0.2cm}
  \caption{\label{Bild6} Calculation of $F$ for $\frac{1}{2}\leq\xi<1$}
\end{figure}
Now we consider the situation for fixed value of $\xi$ with $\frac{1}{2}\leq\xi<1$ (Fig.~\ref{Bild6}). The parallelogram is the fundamental cell $\F$. $\F^*$ is $\F$ without the four grey coloured sets in its corners. The limit distribution is given by
\begin{align*}
  F(\xi)
	= {} & \frac{\mbox{Area\;$\F^*(\xi)$}}{\mbox{Area\;$\F$}}
	= \frac{ab/\sin\alpha-2(A_1'+A_2')}{ab/\sin\alpha} = 1-\frac{2(A_1'+A_2')\sin\alpha}{ab}\,,
\end{align*}
where $A_1'$ and $A_2'$ are the areas of $\F_1'$ and $\F_2'$ respectively. The subset $\F_1'\subset\F$ is defined by
\[ \F_1' = \{(x,y)\in\mathbb{R}^2\:|\;0\leq y\leq\ell\sin\pi\xi\,,\;g_1(y)\leq x\leq f_1(y)\}\,, \]
where $g_1(y)=y\cot\alpha$ is the equation of $G_1$, and $f_1(y)$ the equation of $C_1$ (see \eqref{f1(y)}). Here the upper limit for the variable $y$ is obtained from
\[ \xi = \frac{1}{2}+\frac{\tau}{2\pi} = \frac{1}{2}+\frac{1}{\pi}\arccos\frac{y}{\ell}
		\quad\Longrightarrow\quad y=\ell\sin\pi\xi \,. \]
So we have
\begin{align*}
  f_1(y)-g_1(y)
	= {} & y\cot\alpha+\frac{\ell}{\sin\alpha}\cos\bigg(\!\pi\xi-\arccos\frac{y}{\ell}\bigg)-y\cot\alpha\\
	= {} & \frac{\ell}{\sin\alpha}\cos\bigg(\!\pi\xi-\arccos\frac{y}{\ell}\bigg)
\end{align*}
and
\begin{align*}
  A_1'
	= {} & \int_0^{\ell\sin\pi\xi}\big[f_1(y)-g_1(y)\big]\,\dd y
	= \frac{\ell}{\sin\alpha}\int_0^{\ell\sin\pi\xi}\cos\bigg(\!\pi\xi-\arccos\frac{y}{\ell}\bigg)\,\dd y\,.			
\end{align*}
Using the substitution $u=y/\ell$, we get
\begin{align*}
  A_1'
	= {} & \frac{\ell^2}{\sin\alpha}\int_0^{\sin\pi\xi}\cos(\pi\xi-\arccos u)\,\dd u\displaybreak[0]\\
	= {} & \frac{\ell^2}{2\sin\alpha}\,\bigg[u^2\cos\pi\xi+u\,\sqrt{1-u^2}\sin\pi\xi
				+\arcsin u\sin\pi\xi\bigg]_0^{\sin\pi\xi}\displaybreak[0]\\
	= {} & \frac{\ell^2}{2\sin\alpha}\,\Big[\sin^2\pi\xi\cos\pi\xi+\sin\pi\xi\,\sqrt{\cos^2\pi\xi}\,\sin\pi\xi
				+\arcsin(\sin\pi\xi)\,\sin\pi\xi\Big]\,.		
\end{align*}
From $\frac{1}{2}\leq\xi<1$, it follows that $\cos\pi\xi\leq 0$ and $\arcsin(\sin\pi\xi)=\pi(1-\xi)$; therefore,
\begin{align*}
  A_1' = {} & \frac{\ell^2}{2\sin\alpha}\,\Big[\!-\!\sin^2\pi\xi\:|\:\!\!\cos\pi\xi|+\sin^2\pi\xi\:|\:\!\!\cos\pi\xi|
				+\pi(1-\xi)\,\sin\pi\xi\Big]\displaybreak[0]\\
	= {} & \frac{\ell^2}{2\sin\alpha}\,\pi(1-\xi)\,\sin\pi\xi\,.			
\end{align*}
The subset $\F_2'\subset\F$ is defined by
\[ \F_2' = \{(x,y)\in\mathbb{R}^2\:|\;0\leq y\leq\ell\sin\pi\xi\,,\;f_2(y)\leq x\leq g_4(y)\}\,, \]
where
\[ g_4(y) = \frac{1}{\sin\alpha}\,(a+y\cos\alpha) \]
is the equation of $G_4$, and $f_2(y)$ the equation of $C_2$ (see \eqref{f2(y)}). We get
\beq
  g_4(y)-f_2(y) = \frac{\ell}{\sin\alpha}\,\cos\bigg(\!\pi\xi-\arccos\frac{y}{\ell}\bigg) = f_1(y)-g_1(y)\,.
\eeq
Due to Cavallieri's principle, we have found that $A'_2=A'_1$. Therefore,
\begin{align*}
  F(\xi) = {} & \frac{ab/\sin\alpha-4A}{ab/\sin\alpha} = 1-\frac{4A\sin\alpha}{ab}
	= 1-2\lambda\mu\,\pi(1-\xi)\,\sin\pi\xi\,,
\end{align*}
and the proof is complete.  			
\end{proof}
Note the interesting fact that the limit distribution $F$ is independent of the angle $\alpha$\,! It is the same limit distribution as for the distribution functions of corresponding {\em clusters of needles} (with equal values of $\lambda$ and $\mu$, respectively) \cite[p.\ 221, Theorem~2]{Baesel5}.\\[0.3cm]
\hspace*{0.4cm} The diagrams in Fig.\ \ref{BildF7} and Fig.\ \ref{BildF25} show for $\lambda=1/3$ and $\mu=1/4$ examples of distribution functions and the limit distribution~$F$.\\[0.3cm]
\hspace*{0.4cm} The calculation of many special cases show (as the diagrams suggest) that it is most likely that the $F_{n,\,\alpha}$ converge {\em uniformly} to $F$. 
\begin{figure}[h]
    \vspace{0.3cm}
  \begin{center}
    \includegraphics[scale=1.1]{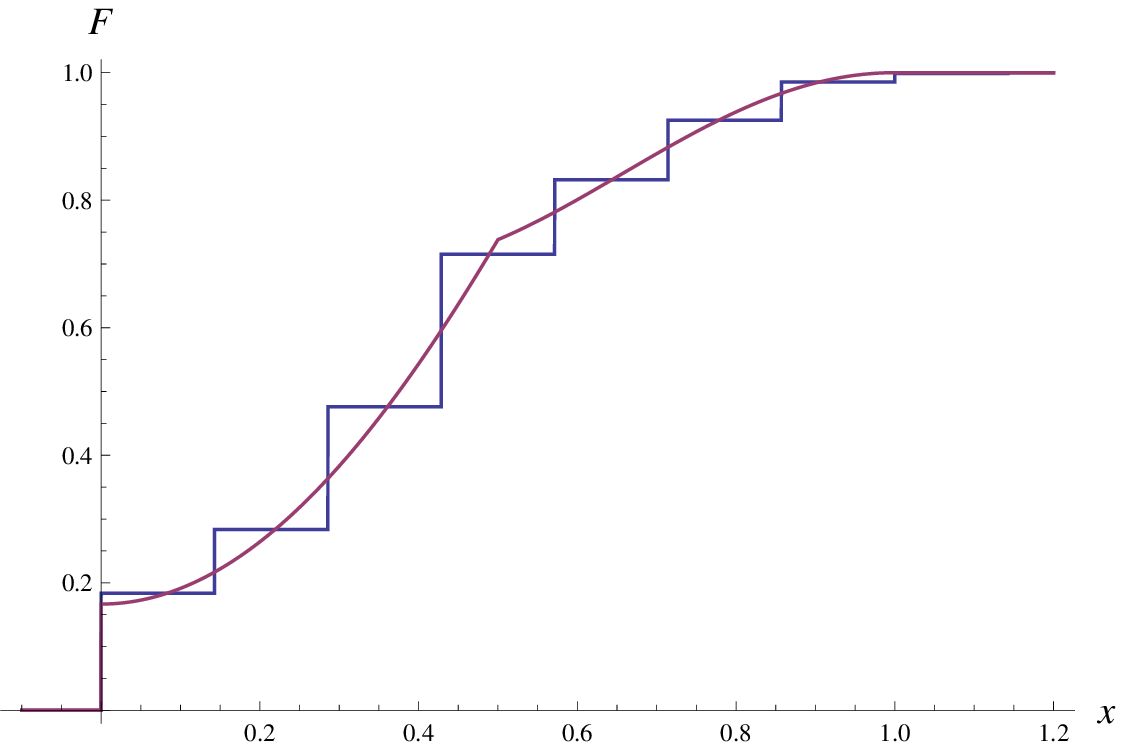}
  \end{center}
  \vspace{-0.3cm}
  \caption{\label{BildF7} $F_{7,\,\alpha}$, $\alpha=k\pi/7\,,\;k=0,1,\ldots,3$, and $F$}
  \vspace{1cm}
  \begin{center}
    \includegraphics[scale=1.1]{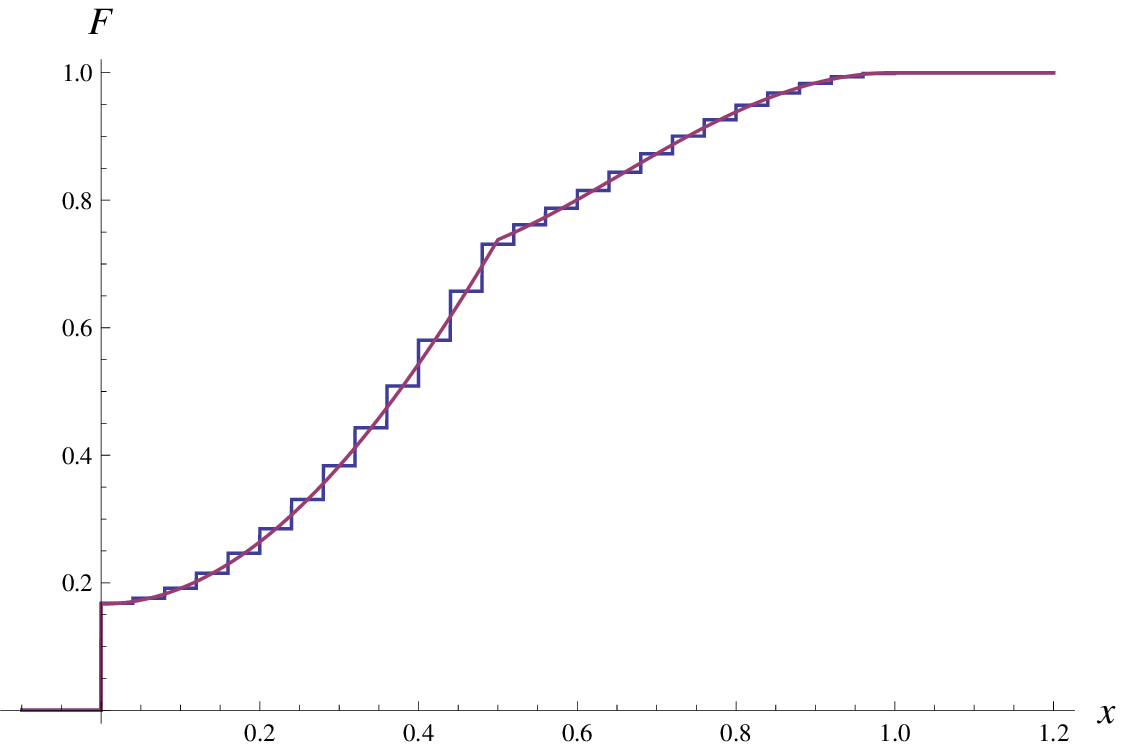}
  \end{center}
  \vspace{-0.3cm}
  \caption{\label{BildF25} $F_{25,\,\alpha}$, $\alpha=k\pi/25\,,\;k=0,1,\ldots,12$, and $F$}
\end{figure}
\clearpage

\vspace{0.2	cm}
\begin{center} 
Uwe B\"asel\\[0.2cm] 
HTWK Leipzig, University of Applied Sciences,\\
Faculty of Mechanical and Energy Engineering,\\
PF 30 11 66, 04251 Leipzig, Germany,\\[0.2cm]
\end{center}
\end{document}